\documentclass[12pt]{article}
\usepackage{amsmath,dsfont}
\usepackage{amsthm}
\usepackage{enumerate}
\usepackage{amssymb}
\usepackage{multirow}
\usepackage[latin1]{inputenc}
\usepackage{color}
\usepackage{float}
\usepackage{graphicx}
\begin{document}
\newcommand{\R}{{\bf R}}
\newcommand{\K}{{\bf K}}
\newcommand{\C}{{\bf C}}
\newcommand{\seq}[1]{(#1_n)_{n=1,2,\dots}}
\newcommand{\cs}[1]{{\color{blue}#1}}
\newtheorem{thm}{Theorem}[section]
\newtheorem{defn}[thm]{Definition}
\newtheorem{cor}[thm]{Corollary}
\newcommand{\rtref}[1]{{\rm \ref{#1}}}
\newcommand{\rmref}[1]{{\rm (\ref{#1})}}
\newcommand{\rmcite}[1]{{\rm \cite{#1}}}
\newcommand{\rmciter}[2]{{\rm \cite[#1]{#2}}}
\newtheorem{lem}[thm]{Lemma}
\newtheorem{prop}[thm]{Proposition}
\newtheorem{rem}[thm]{Remark}
\newtheorem{example}[thm]{Example}

\title {A new characterization of the hazard rate and reversed hazard rate orders with applications}

\author{C. Sangüesa \footnote{e-mail:  csangues@unizar.es}\\ \small Department of Statistical Methods and IUMA, \\ \small University of Zaragoza, Zaragoza, 50009,  SPAIN}

\date{}

\begin{titlepage}
\setcounter{page}{1} \maketitle

\bigskip \bigskip
\begin{abstract}
We propose a general characterization of the hazard rate and reversed hazard rate stochastic orders for random variables that are absolutely continuous with respect to a common dominating measure. This framework is useful in giving a unified treatment of continuous, discrete, and mixed distributions without requiring ad hoc approximation techniques or restrictive integrability assumptions. Using this result, we provide direct proofs of the bivariate characterizations of both orders. Additionally, we introduce the class of $\overline{G}$-IFR and $\overline{G}$-DRHR distributions on additive groups, unifying aging properties across different domains and proving their closure under convolution. Finally, we revisit and extend the classic results of Shanthikumar and Yao (1991) on the preservation of hazard rate orders under random sums, simplifying the underlying conditions and accommodating discrete and mixed sum components.

\end{abstract}

\bigskip \bigskip
2020 Mathematics Subject Classification: 60E15; 62N05; 60G50.

\bigskip \bigskip
{\it \cs{Keywords} and phrases}: Stochastic orders; Hazard rate order; Reversed hazard rate order; Dominating measure; Bivariate characterization; Random sum; IFR distribution

\bigskip \bigskip

\bigskip \noindent

\end{titlepage}

\maketitle

\baselineskip=1.5\baselineskip

\section{Introduction}

Stochastic orderings provide a fundamental framework for comparing random variables in applied probability, reliability theory, queueing models, and risk analysis (see, e.g., Müller and Stoyan \cite{mustst}, Marshall and Olkin \cite{maolli}, and Shaked and Shanthikumar \cite{shshst}). Among these relations, the hazard rate order ($\leq_{\text{hr}}$) and the reversed hazard rate order ($\leq_{\text{rh}}$) play a crucial role when assessing aging properties, repairable systems, and optimal allocation or scheduling policies.  Classical characterizations of $\leq_{\text{hr}}$ and $\leq_{\text{rh}}$ using hazard rate functions or densities are typically presented under two distinct setups: either assuming absolute continuity with respect to the Lebesgue measure (for continuous distributions) or restricting to integer-valued random variables. While intuitive, this separation often complicates technical proofs when dealing with mixed distributions or point masses (such as burn-in effects at the origin or censoring points). Furthermore, bivariate characterizations of these orders -initially introduced by Shanthikumar and Yao \cite{shyabi} for the hazard rate order and later adapted by Kijima and Ohnishi \cite{kionpo} for the reversed hazard rate order- frequently require approximation arguments or additional integrability conditions when extended to general settings (cf. Müller and Stoyan \cite[Ch. 1]{mustst}).  The main purpose of this paper is to establish a rigorous, unified, and self-contained measure-theoretic framework for the hazard rate and reversed hazard rate orders using general densities with respect to a common dominating measure $\mu$. Our approach avoids functional approximations or restrictive moment conditions, providing direct proofs for their bivariate characterizations.  Based on this general formulation, the contributions of this manuscript are threefold: First, we prove necessary and sufficient density-based conditions for $\leq_{\text{hr}}$ and $\leq_{\text{rh}}$ comparisons under an arbitrary dominating measure $\mu$, unifying the continuous, discrete, and mixed cases into a single mathematical formulation (Theorem \ref{tgf1}). Second, we introduce the classes of $\overline{G}$-IFR and $\overline{G}$-DRHR distributions defined on the closure of additive groups $G \subseteq \mathbb{R}$ (Definition \ref{dfgclas}). This formulation embraces standard continuous IFR/DRHR properties ($G = \mathbb{R}$) as well as discrete IFR/DRHR properties ($G = \mathbb{Z}$), proving their closure properties under convolution (Theorem \ref{thclco} and Theorem \ref{thpcon}). Third, we revisit the celebrated result of Shanthikumar and Yao \cite[Theorem 3.11]{shyabi} regarding the preservation of the hazard rate order under random summation. By leveraging our general density characterization, we significantly simplify their core density conditions -reducing them to diagonal evaluations- and extend the results to allow discrete/mixed summands as well as random summation indices with positive mass at zero (Theorem \ref{thgera} and Corollary \ref{coge}).  The organization of the paper is as follows. Section \ref{semain} presents the main characterization theorem using general dominating measures and provides illustrative examples. Section \ref{seage} contains the complete, self-contained proof of the bivariate characterizations. In Section \ref{sepres}, we introduce aging properties on groups and discuss their preservation under summation. Finally, Section \ref{sestoc} addresses stochastic comparisons for random sums and discusses several applications.

\section{Main result} \label{semain}
Let $X$ be a random variable with respective cumulative distribution function (cdf) $F$.  We will say that $F$ is absolutely continuous with respect to a dominating measure $\mu$ if there exists a $\mu$-measurable, non-negative function $f$ (called a density) such that
\[F(x)=\int_{(-\infty,x]}f(u)d\mu(u), \quad x\in\mathbb{R}. \]
Every random variable is absolutely continuous with respect to a dominating measure (for instance, its own probability measure). Moreover, let $Y$ be another random variable with cdf $G$. We will say that $F$ and $G$ are absolutely continuous with respect to the common dominating measure $\mu$ if both $F$ and $G$ are absolutely continuous with respect to $\mu$. It is well known that given two cdfs $F$ and $G$ we can always find a dominating measure for which both $F$ and $G$ are absolutely continuous, for instance
\[\mu((-\infty,x])=F(x)+G(x),\quad x\in \mathbb{R}.\]
From now on we recall that the support of a real measure on $\mathbb{R}$ is defined as
\[Supp(\mu)=\{x:\ \mu((x-\epsilon,x+\epsilon))>0 \quad \hbox{for all $\epsilon>0$}\},\]
and similarly, for a random variable $X$, $supp(X)$ will denote the support of the probability measure generated by $X$. As usual, we will say that $\mu$ has support on a Borel set $A$ if $\mu(A)=1$.  Analogously, we will say that $X$ has support on $A$ if its probability measure does.
It is also well known that a density function $f$ with respect to a dominating measure $\mu$ satisfies (cf. \cite[p.419]{biprob})
\begin{equation}
	f(x)=\lim_{h\downarrow 0}\frac{F(x+h)-F(x-h)}{\mu((x-h,x+h])}, \quad  \mu \hbox{ a.e.} \label{limder}
\end{equation}
Finally, we will assume with no loss of generality that if $x\not \in Supp(\mu)$, $f(x)=0$. Our aim in this paper is to characterize the hr and rh orders using general densities.  First of all, we recall the definitions of the most common stochastic orders to be used along the paper. (cf. \cite{mustst}, \cite{shshst} or \cite{beanin}) 
\begin{defn}
	Let $X$ and $Y$ be random variables, with cdfs $F$ and $G$, and survival functions $\bar{F}:=1-F$ and $\bar{G}:=1-G$, respectively. Then $X$ is said to be smaller than $Y$ in the
	\begin{itemize}
		\item[(a)] usual stochastic order ($X\leq_{\text{st}} Y$) if $\bar{G}(x)\geq \bar{F}(x)$ for all $x \in \mathbb{R}$.
		\item[(b)] hazard rate order ($X\leq_{\text{hr}} Y$) if $\displaystyle \frac{\bar{G}(x)}{\bar{F}(x)}$ is increasing in $x$ (i.e. $\bar{G}(y) \bar{F}(x)-\bar{G}(x) \bar{F}(y)\geq 0, \ x\leq y$).
		\item[(c)] reversed hazard rate order ($X\leq_{\text{rh}} Y$) if $\displaystyle \frac{G(x)}{F(x)}$ is increasing in $x$ (i.e. $G(y) F(x)-G(x) F(y)\geq 0, \  x\leq y$).
			\item[(d)]Likelihood ratio order ($X\leq_{\text{lr}} Y$) if $X$ and $Y$ are absolutely continuous with respect to a common dominating measure $\mu$, with respective densities $f$ and $g$ satisfying that  $\displaystyle \frac{g(x)}{f(x)}$ is increasing in $x$, $\mu$ a.e. (i.e. $g(y) f(x)-g(x) f(y)\geq 0, \  x\leq y \ (\mu \times \mu)\text{ a.e. } (x,y)$.
	\end{itemize}
	\label{dfstor}
\end{defn}
It is well known that (d) $\Rightarrow$ (a), (b) and (c). Moreover, (b) or (c) $\Rightarrow$ (a).  The next result provides the characterization of the hr and rh orders using densities, as in the definition of the lr order.    

\begin{thm} Let $X$ and $Y$ be random variables absolutely continuous with respect to the same dominating measure $\mu$, with respective densities $f$ and $g$. We have  
	\begin{enumerate}[(a)]
	\item $X\leq_{\text{hr}} Y \ \Longleftrightarrow \displaystyle f(x)\bar{G}(x)\geq g(x)\bar{F}(x)$ $\mu$ a.e.
		\item $X\leq_{\text{rh}} Y \ \Longleftrightarrow \displaystyle f(x)G(x)\leq g(x)F(x)$ $\mu$ a.e.
\end{enumerate} \label{tgf1}
\end{thm}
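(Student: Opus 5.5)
The plan is to prove (a) directly and obtain (b) either by the same computation or by reflection. For (b), note that $X\leq_{\text{rh}}Y$ if and only if $-Y\leq_{\text{hr}}-X$. The reflected variables have densities $f(-\cdot)$ and $g(-\cdot)$ with respect to the reflected measure $\tilde\mu$. Left limits then appear in place of values, so I would probably redo the computation below with $G/F$ instead of $\bar G/\bar F$ rather than fight with atoms. Throughout, the identity $\bar F(x)-\bar F(y)=\int_{(x,y]}f\,d\mu$ for $x<y$ is the only link between $F$ and $f$.

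\emph{Necessity in (a).} Fix $x\in Supp(\mu)$ and $h>0$, and write $\Delta_hF=F(x+h)-F(x-h)=\bar F(x-h)-\bar F(x+h)$, and similarly $\Delta_hG$. A direct expansion gives
\[
\Delta_hF\,\bar G(x+h)-\Delta_hG\,\bar F(x+h)=\bar F(x-h)\bar G(x+h)-\bar G(x-h)\bar F(x+h),
\]
which is $\geq 0$ by Definition~\ref{dfstor}(b) applied to $x-h\leq x+h$. I will divide by $\mu((x-h,x+h])>0$ and let $h\downarrow 0$. By \eqref{limder} and right-continuity of $\bar F,\bar G$, this gives $f(x)\bar G(x)-g(x)\bar F(x)\geq 0$ for $\mu$-a.e.\ $x\in Supp(\mu)$. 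Off the support both densities vanish by convention, so the inequality holds $\mu$-a.e.

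\emph{Sufficiency in (a).} Take $x\le y$. If $\bar F(y)=0$, the target inequality $\bar G(y)\bar F(x)-\bar G(x)\bar F(y)\ge 0$ reduces to $\bar G(y)\bar F(x)\geq 0$, which is trivial. So assume $\bar F(y)>0$; then $\bar F>0$ on $(-\infty,y]$. It suffices to show that $H=\bar G/\bar F$ is nondecreasing there. I will use the Lebesgue--Stieltjes product rule for right-continuous functions of bounded variation, $d(AB)(u)=A(u-)\,dB(u)+B(u)\,dA(u)$, with $A=1/\bar F$ and $B=\bar G$. Here $dB=-g\,d\mu$, and $dA(u)=\frac{f(u)}{\bar F(u)\bar F(u-)}\,d\mu(u)$. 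This last formula is the key auxiliary claim. At atoms it is the exact identity $\frac1{\bar F(u)}-\frac1{\bar F(u-)}=\frac{\bar F(u-)-\bar F(u)}{\bar F(u)\bar F(u-)}$, and on the diffuse part it is the chain rule for $t\mapsto 1/t$, which is smooth on $[\bar F(y),1]$. Combining,
\[
H(y)-H(x)=\int_{(x,y]}\frac{f(u)\bar G(u)-g(u)\bar F(u)}{\bar F(u)\,\bar F(u-)}\,d\mu(u)\;\geq\;0
\]
by hypothesis, which is exactly $X\le_{\text{hr}}Y$.

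The main obstacle is justifying the formula for $d(1/\bar F)$ in the mixed case, that is, the chain rule for a BV function having both atoms and a diffuse singular part with respect to $\mu$. The first approach would be to verify it on intervals: show that $\frac1{\bar F(b)}-\frac1{\bar F(a)}=\int_{(a,b]}\frac{f\,d\mu}{\bar F(u)\bar F(u-)}$ by partitioning $(a,b]$ finely, using the exact telescoping identity on each piece, and passing to the limit by dominated convergence. This is legitimate because the integrand is bounded by $\bar F(y)^{-2}f$. For (b), the same scheme applies with $H=G/F$ on $\{F>0\}$, using $dG=g\,d\mu$ and $d(1/F)=-\frac{f\,d\mu}{F(u)F(u-)}$. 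The product rule then yields the integrand $\frac{g(u)F(u)-f(u)G(u)}{F(u)F(u-)}$, which is nonnegative under the hypothesis $fG\le gF$. The necessity direction is verbatim the same as in (a), using $\Delta_hF\,G(x-h)-\Delta_hG\,F(x-h)$ together with right-continuity of $F$ and $G$.
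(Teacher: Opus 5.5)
Your part (a) is correct, but the direct route you choose for (b) has a genuine gap. With $H=G/F$ on $\{F>0\}$, every integral runs over $(x,y]$ with $F(x)>0$, so all you obtain is that $G/F$ is nondecreasing on $\{F>0\}$.

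Unlike in (a), the remaining pairs are not trivial. If $F(x)=0<F(y)$, the required inequality $G(y)F(x)\ge G(x)F(y)$ reduces to $G(x)F(y)\le 0$, so you must prove $G(x)=0$. Monotonicity on $\{F>0\}$ does not give this. Take $X\equiv 1$ and $Y$ uniform on $(0,2)$: $G/F=G$ is nondecreasing on $\{F>0\}=[1,\infty)$, yet $G(1)F(1/2)-G(1/2)F(1)=-1/4<0$. The hypothesis does exclude this pair, but only at the left-endpoint atom of $X$. With $\mu=\delta_1+{}$Lebesgue measure on $(0,2)$ one has $f(1)G(1)=1/2>0=g(1)F(1)$, and $u=1$ never lies in any of your intervals $(x,y]$.

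The fix is to mirror (a) correctly. Under reflection, the smaller variable is $-Y$, with survival function $t\mapsto G((-t)^-)$, so the denominator should be $G$, not $F$. Work with $F/G$ on $\{G>0\}$. The same product rule gives $d(F/G)(u)=\frac{f(u)G(u)-g(u)F(u)}{G(u)G(u^-)}\,d\mu(u)\le 0$ on $(x,y]$ whenever $G(x)>0$. The leftover case $G(x)=0$ is now genuinely trivial. Alternatively, keep $G/F$ and add a boundary argument for $a=\inf\{F>0\}$. If $F(a)>0$, the hypothesis at the atom $a$ together with $F(a)=f(a)\mu(\{a\})$ gives $G(a^-)=0$. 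If $F(a)=0$, use $G/F\le 1$ on $(a,\infty)$ and right-continuity to get $G(a)=0$.

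Your necessity argument for (b) also needs a correction. Since $G(x-h)\to G(x^-)$ and $F(x-h)\to F(x^-)$, right-continuity does not apply, and you only obtain $f(x)G(x^-)\le g(x)F(x^-)$. There are two easy repairs:
\begin{itemize}
\item Anchor at $x+h$, where $\Delta_hF\,G(x+h)-\Delta_hG\,F(x+h)=G(x-h)F(x+h)-F(x-h)G(x+h)\le 0$.
\item Use the pointwise identity $f(u)G(u)-g(u)F(u)=f(u)G(u^-)-g(u)F(u^-)$, which holds because $F(u)-F(u^-)=f(u)\mu(\{u\})$ and likewise for $G$.
\end{itemize}
That identity is also why the reflection route you set aside costs nothing: it is exactly how the paper passes from (a) to (b), via (\ref{equiv}).

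For (a), your route genuinely differs from the paper's. The paper proves sufficiency through the residual-life characterization of $\le_{\text{hr}}$ and the product-limit representation $\bar F=e^{-\Lambda^c}\prod(1-\Delta\Lambda)$, comparing continuous parts and jumps separately. Your integration-by-parts identity for $\bar G/\bar F$ is self-contained, and it exhibits the increment of the ratio as the integral of exactly the quantity in the hypothesis. Your \emph{main obstacle} also dissolves: applying the same product rule to $1=\bar F\cdot(1/\bar F)$ gives $d(1/\bar F)(u)=-d\bar F(u)/(\bar F(u)\bar F(u^-))$ at once. Your partition argument with dominated convergence is also valid.
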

\begin{proof} Firstly, we show part (a). For the only if case, let us assume that $X\leq_{\text{hr}} Y$. Thus, for any $h>0$ and $x\in \mathbb{R}$ we can write
 \[\bar{G}(x-h)\bar{F}(x+h)\leq \bar{F}(x-h)\bar{G}(x+h).\]
 Observe that this inequality is equivalent to write \begin{equation}(\bar{F}(x-h)-\bar{F}(x+h))\bar{G}(x+h)\geq (\bar{G}(x-h)-\bar{G}(x+h))\bar{F}(x+h). \label{thgeh1}
 	\end{equation}
 The conclusion follows, for $x\in Supp(\mu)$ dividing (\ref{thgeh1}) by $\mu((x-h,x+h])$, taking limits as $h\downarrow 0$ and using (\ref{limder}).  Notice that the set of points outside the support of $\mu$ have measure $0$.
For the if part, we need to prove that $X\leq_{\text{hr}}Y$. We will use the following characterization of the hr order, using the residual lifetimes see \cite[p.8]{mustst}
\begin{equation}
X\leq_{\text{hr}}Y \ \Longleftrightarrow X-t|X> t\leq_{\text{st}} Y-t|Y>t \hbox{ for all $t$ such that $\bar{F}(t)>0$.}
\label{rhres}\end{equation}
To check (\ref{rhres}), we take into account that for a cdf $F$, being absolutely continuous with respect a dominating measure $\mu$, the cumulative hazard function is defined as
\[\Lambda(x)=\int_{(-\infty,x]}\frac{dF(u)}{1-F(u^{-})}=\int_{(-\infty,x]}\frac{f(u)}{1-F(u^{-})}d\mu(u),\quad x\in \mathbb{R}.\]
where, as usual, the integrand takes the value $0$ if $1-F(u^{-})=0$.  In classical survival analysis, $\Lambda$ is usually defined as an integral on the interval $[0,x]$, because non-negative random variables are considered, but there is no problem in extending this integral to $x\in\mathbb{R}$. See, for instance \cite{jaknin}.   Note that, for a non-negative random variable $X$, with cdf $F$, its survival function is uniquely defined by $\Lambda$ using the following formula  (see, for instance, \cite[p.91]{anstat})
\begin{equation}\bar{F}(x)=exp\{-\Lambda^c(x)\}\prod_{\substack{u\leq x,\\ u\in D}}(1-\Delta\Lambda(u)), \quad x\geq 0,\label {surcum}\end{equation}
where $D$ is the countable set of discontinuity points of $F$, $\Delta\Lambda(u)=\Lambda(u)-\Lambda(u^{-})$ is the jump height at $u$ and $\Lambda^c(x)=\Lambda(x)-\sum_{u\leq x, u\in D } \Delta\Lambda(u)$ is the continuous part of $\Lambda$. 
Moreover, under the conditions of Theorem \ref{tgf1} (a), it is immediate using that  $\bar{F}(x)=1-F(x^{-})+f(x)\mu(x)$ (and analogously for $g$)
\begin{equation}f(x)\bar{G}(x)\geq g(x)\bar{F}(x)\ \Leftrightarrow  f(x)(1-G(x^{-}))\geq g(x)(1-F(x^{-})), \quad \hbox{ $\mu$ a.e.} \label{equiv}\end{equation}
We will apply (\ref{surcum}) to the survival lifetimes $X_t$ and $Y_t$. First of all, let us call $\Lambda_1$ and $\Lambda_2$ the cumulative hazard functions of $X$ and $Y$, respectively, and denote by $\Lambda_{1,t}$ and $\Lambda_{2,t}$ the cumulative hazard functions of $X_t$ and $Y_t$, whenever $\bar{F}(t)>0$. It can be easily checked that
\begin{equation}\Lambda_{i,t}(s)=\Lambda_i(t+s)-\Lambda_i(t),\quad i=1,2, \quad s\geq 0. \label{difer}\end{equation}
Let us call $\bar{F}_t$ the survival function of $X_t$, and denote by $D_1$ and $D_2$ the set of discontinuity points of $F$ and $G$. By (\ref{surcum}) and  (\ref{difer}) , we have 
\begin{equation}\bar{F}_t(s)=exp\{-\Lambda_{1,t}^c(s)\}\prod_{\substack{t<u\leq t+s,\\ u\in D_1} }(1-\Delta\Lambda_1(u)), \label {surcu2}\end{equation}
From now on, we will assume that $\bar{F}_t(s)>0$, otherwise $\bar{G}_t(s)\geq\bar{F}_t(s)$ holds trivially.  We have from (\ref{equiv}) that $D_2\bigcap(t,t+s]\subseteq D_1\bigcap(t,t+s]$.  Let us call $g^{c}(x)=g(x)1_{x\not \in D_1}$ and  $f^{c}(x)=f(x)1_{x\not \in D_1}$. Observe that we have by (\ref{equiv})
 \begin{equation}\Lambda_{1,t}^c(s)=\int_{(t,t+s]}\frac{f^{c}(u)}{1-F(u^{-})}d\mu(u)\geq \int_{(t,t+s]}\frac{g^{c}(u)}{1-G(u^{-})}d\mu(u) =\Lambda_{2,t}^c(s) .\label{fact1}\end{equation}
Moreover,
\begin{align}&\prod_{\substack{t<u\leq t+s,\\ u\in D_1}}(1-\Delta\Lambda_1(u))=\prod_{\substack{t<u\leq t+s,\\ u\in D_1}} \left(1-\frac{\mu(u)f(u)}{1-F(u^{-})}\right)\nonumber\\&\leq\prod_{\substack{t<u\leq t+s,\\ u\in D_2}} \left(1-\frac{\mu(u)g(u)}{1-G(u^{-})}\right)=\prod_{\substack{t<u\leq t+s,\\ u\in D_2}}(1-\Delta\Lambda_2(u)) .\label{fact2}\end{align} Therefore, we conclude from (\ref{fact1}), (\ref{fact2}) and (\ref{surcu2}) that $\bar{F}_t(s)\leq \bar{G}_t(s), \ s\geq 0$, thus showing that  $X-t|X> t\leq_{\text{st}} Y-t|Y>t$ and concluding by  (\ref{rhres}) that $X\leq_{\text{hr}}Y$.  This shows the if part. 

For part (b),  let $\bar{F}^{\star}$ ($ \bar{G}^{\star}$) be the survival function of $-X$ ($-Y$). We take into account (cf. \cite[Thm. 1.3.10]{mustst}) that $X\leq_{\text{rh}} Y  \Leftrightarrow -Y\leq_{\text{hr}} -X$. Using part (a), the implication on the right holds if and only if $-X$ and  $-Y$ have a common dominating measure (say $\mu^{\star}$) with respective densities $f^{\star}$ and $g^{\star}$ such that (\ref{equiv}) holds, that is
\begin{equation} g^{\star}(x)\bar{F}^{\star}(x)\geq f^{\star}(x)\bar{G}^{\star}(x)\ \Leftrightarrow  g^{\star}(x)(1-F^{\star}(x^{-}))\geq f^{\star}(x)(1-G^{\star}(x^{-})), \quad \hbox{ $\mu^{\star}$ a.e.}\label{equiv2}\end{equation}
Now consider the measure $\mu$ such that $\mu(B)=\mu^{\star}(-B)$, for all Borel set $B$. It is readily seen that a density for $X$ with respect to $\mu$ is given by $f(x)=f^{\star}(-x)$ and that $F(x)=1-F^{\star}(-x^{-})$. Thus the last inequality in (\ref{equiv2}) holds if and only if 
\[ g(-x)F(-x)\geq f(-x)G(-x),\quad  \mu \hbox{ a.e.} 
\]
this concludes the proof.
 	\end{proof}
 	\begin{rem} If we define the generalized hazard rate and reversed hazard rate of a random variable $X$, having density $f$ with respect to a dominating measure $\mu$ as 
 		\begin{equation}
 			r_X(x):=\frac{f(x)}{\bar{F}(x)}, \quad \text{and}\quad 	\tilde{r}_X(x):=\frac{f(x)}{F(x)} \quad \mu \ a.e.\label{fairev}
 		\end{equation}
 		Theorem \ref{tgf1} shows that
 		\begin{equation}
 			X\leq_{\text{hr}}Y\Leftrightarrow	r_X(x)\geq r_Y(x) \quad \text{and}\quad X\leq_{\text{rh}}Y\Leftrightarrow  \tilde{r}_X(x)\leq \tilde{r}_Y(x) \quad  \mu \ a.e.  \label{frevord}
 		\end{equation}
Observe also that an immediate consequence of the previous result we have that if inequality
 		\[f(x)\bar{G}(x)\geq g(x)\bar{F}(x)\ \hbox{ $\mu$ \  a.e.}\]
 		holds for some dominating measure $\mu$, then it holds for all dominating measures, due to (\ref{thgeh1}) and the limiting argument below (\ref{thgeh1}). 
 		
 		Finally, as mentioned before, in the literature of survival analysis, we can find identity (\ref{equiv}) for nonnegative random variables.  However, it is immediate to extend this identity for real random variables.  We only need to take into account that
 		\[\bar{F}(x)=P(X>x|X>t)P(X>t)=\bar{F}_t(x-t) P(X>t), \quad x\geq t. \]
 		and we have the extension of (\ref{equiv}) for real random variables using (\ref{surcu2}) and letting $t\rightarrow -\infty$.\end{rem}

 \begin{example}
 	Let us consider two non-negative random variables $X$ and $Y$ having piecewise exponential distributions. Specifically, assume there exist $n$ change points $0 < x_1 < \dots < x_n$ (with $x_0 = 0$ and $x_{n+1} = \infty$) such that $X$ has a constant failure rate $\lambda_i$ on the interval $(x_i, x_{i+1})$. Analogously, $Y$ has constant failure rate $\lambda_i^\star$ on $(x_i, x_{i+1})$. 
 	
 	Furthermore, assume that both models can accumulate point masses at $\{0, x_1, x_2, \dots, x_n\}$ with probabilities $p_0, p_1, \dots, p_n$ for $X$, and $p_0^\star, p_1^\star, \dots, p_n^\star$ for $Y$. 
 	
 	In reliability theory, a point mass at $0$ models burn-in defects in a lifetime. Moreover, in survival analysis, a point mass at $x_n$ (such that $p_n = 1 - F(x_n^-)$ together with $\lambda_n = 0$) is commonly used to describe censoring.
 
Moreover, if $X$ represents the duration a client stays with a bank or company, the points $x_i$ represent the instants at which contract conditions change, making it natural to assume a positive probability $p_i$ of a client leaving at $x_i$.
 	
 	A dominating measure for both $X$ and $Y$ is given by Lebesgue measure $\ell$ on $[0,\infty)$  plus counting measure on the discrete set:
 	\[
 	\mu(x) = \ell(x) + \mathbf{1}_{\{0, x_1, \dots, x_n\}}(x), \quad x \in \mathbb{R}.
 	\]
 	Then, applying Theorem~\ref{tgf1} and (\ref{equiv}), $X \leq_{\text{hr}} Y$ if and only if:
 	\[
 	\lambda_i \geq \lambda_i^\star \quad \text{and} \quad r_i := \frac{p_i}{1-F(x_i^{-})} \geq \frac{p_i^\star}{1-G(x_i^{-})} := r_i^\star, \quad i = 0, 1, \dots, n.
 	\]
 	Observe also that in this case, $\Lambda(x_i) - \Lambda(x_i^-) = r_i$. Thus, using (\ref{surcum}), the survival function can be written as:
 	\[
 	\bar{F}(x) = (1 - r_i) e^{-\lambda_i (x - x_i)} \prod_{j=0}^{i-1} (1 - r_j) e^{-\lambda_j (x_{j+1} - x_j)}, \quad x \in [x_i, x_{i+1}), \quad i = 0, \dots, n,
 	\]
 	where by convention $\prod_{j=0}^{-1} \cdot = 1$ when $i = 0$.
 	
 The conditions provided for the hr order do not guarantee the rh order.  Let us consider two models as before where the only point of discontinity is $0$ and $0<p_0<1$. Then, 
 	\begin{equation}X\leq _{rh}Y \quad \Leftrightarrow \quad p_0 \geq p_0^\star\quad \text{and}\quad  \lambda_0^\star\leq \lambda_0\leq \lambda_0^{\star} \frac{p_0(1 - p_0^\star)}{p_0^{\star}(1-p_0)}. \label{miexrh}\end{equation}
Note that if $p_0^{\star}=0$, last condition is simply $\lambda_0^\star\leq \lambda_0$. The first condition is trivial, as
 	\[\frac{p_0^\star}{p_0}=\frac{G(0)}{F(0)}\leq \lim_{x\rightarrow \infty }\frac{G(x)}{F(x)}=1.\]
Now, let us analyze the second condition.  Observe that
\[\tilde{r}_X(x)=\frac{(1-p_0)\lambda_0 e^{-\lambda_0 x}}{p_0+(1-p_0) (1-e^{-\lambda_0 x})}\quad \text{and} \quad \tilde{r}_Y(x)=\frac{(1-p_0^{\star})\lambda_0^{\star} e^{-\lambda_0^{\star} x}}{p_0^{\star}+(1-p_0^{\star}) (1-e^{-\lambda_0^{\star} x})}, \quad x>0. \]
Observe that, in particular, conditions in Theorem~\ref{thgeh1} (b), imply that
\[\lim_{x\rightarrow 0+}\tilde{r}_X(x)\leq \lim_{x\rightarrow 0+}\tilde{r}_Y(x) \ \Leftrightarrow \	\lambda_0 \left(\frac{1 - p_0}{p_0}\right) \leq \lambda_0^\star \left(\frac{1 - p_0^\star}{p_0^\star}\right),\]
 	thus showing the upper bound for $\lambda_0$. For the lower bound, we need that
 	\[d(x)=\frac{1}{\tilde{r}_X(x)} - \frac{1}{\tilde{r}_Y(x)} =\left( \frac{e^{\lambda_{0} x}}{\lambda_{0}(1-p_{0})} - \frac{1}{\lambda_{0}} \right) - \left( \frac{e^{\lambda_{0}^\star x}}{\lambda_{0}^\star(1-p_{0}^\star)} - \frac{1}{\lambda_{0}^\star} \right)\geq 0,\quad x>0.\]
 	Observe that, if $\lambda_{0}< \lambda_{0}^\star$, $\lim_{x\rightarrow \infty}d(x)=-\infty$. Thus necessarily  $\lambda_{0}\geq \lambda_{0}^\star$. In this case, $d(x)$ is increasing, as $d'(x)\geq 0$, thus completing the proof. \label{exexpo}
 \end{example}

	\section{A general proof for the bivariate characterization of the hr and rh orders.} 	 \label{seage}	
 	
Theorem \ref{tgf1} allows us to check in an easy way the celebrated bivariate characterizations of the hr and the rh orders for general random variables, very useful in applied probability problems.  Note that the initial proof provided in \cite{shyabi} for the hr order dealt with the case of absolutely continuous random variables (and skipped some technical details), as well as the  proof given in \cite{kionpo} for the rh order.  Moreover, the proofs given in the book of Muller and Stoyan \cite{mustst} would require an approximation procedure for expectations, and therefore additional integrability conditions. The next proof is general, and it is only based on the characterization given in  Theorem \ref{tgf1} and elementary properties of the hr and rh orders. 
\begin{thm} Let $X$ and $Y$ be two random variables.  We have \begin{enumerate}[(a)]
		\item $X\leq_{\text{hr}} Y$ if and only if
		\[Eh(X^\star,Y^\star)\leq Eh(Y^\star,X^\star), \quad \hbox{for all}\quad h \in {\cal G}_{hr},\]
		where
		\[{\cal G}_{hr}=\{h:\mathbb{R}^2 \longrightarrow\mathbb{R}| \Delta h(x,y):=h(x,y)-h(y,x)\ \hbox{is increasing in $x$ for all $x\geq y$} \}\]
		and $X^\star$ and $Y^\star$ are independent with $X^\star=_{\text{st}}X$ and $Y^\star=_{\text{st}} Y$. 
 \item $X\leq_{\text{rh}} Y$  if and only if
\[Eh(X^\star,Y^\star)\leq Eh(Y^\star,X^\star), \quad \hbox{for all}\quad h \in {\cal G}_{rh},\]
where
\[{\cal G}_{rh}=\{h:\mathbb{R}^2 \longrightarrow\mathbb{R}| \Delta h(x,y)\ \hbox{is increasing in $x$ for all $x\leq y$} \}\]
and $X^\star$ and $Y^\star$ are independent with $X^\star=_{\text{st}} X$ and $Y^\star=_{\text{st}} Y$. 	\end{enumerate}\label{thbiva}\end{thm}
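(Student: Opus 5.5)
For the "if" direction of (a) I will pick a specific $h$. Fix $s<t$ and set $h(x,y)=1_{\{x>t,\ y>s\}}$. Then
\[Eh(X^\star,Y^\star)=\bar F(t)\bar G(s), \qquad Eh(Y^\star,X^\star)=\bar G(t)\bar F(s),\]
and the required inequality is exactly $\bar F(t)\bar G(s)\le \bar G(t)\bar F(s)$, i.e. $X\le_{\text{hr}}Y$. It remains to check that this $h$ belongs to ${\cal G}_{hr}$. We have
\[\Delta h(x,y)=1_{\{x>t,y>s\}}-1_{\{y>t,x>s\}}.\]
For fixed $y$ and $x\ge y$ there are three cases. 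If $y>t$, then $x>t>s$, so both terms equal $1$ and $\Delta h=0$. If $s<y\le t$, then $\Delta h=1_{\{x>t\}}$, which is increasing in $x$. If $y\le s$, then $\Delta h=0$. So $h\in{\cal G}_{hr}$. For (b) I will use $h(x,y)=1_{\{x\le s,\ y\le t\}}$ with $s<t$, which gives $F(s)G(t)\le G(s)F(t)$, and the analogous case check works on $x\le y$. Alternatively, (b) follows from (a) via $X\le_{\text{rh}}Y\Leftrightarrow -Y\le_{\text{hr}}-X$: the map $h\mapsto \tilde h(x,y)=-h(-y,-x)$ should send ${\cal G}_{rh}$ into ${\cal G}_{hr}$ and convert the inequality appropriately. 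I will verify this on $\Delta$ when writing the proof.

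For the "only if" direction of (a), let $\mu$ be a common dominating measure with densities $f,g$, as in Theorem \ref{tgf1}. Write
\[Eh(Y^\star,X^\star)-Eh(X^\star,Y^\star)=\iint \Delta h(x,y)\, g(x)f(y)\,d\mu(x)d\mu(y).\]
Splitting into the regions $x>y$, $x<y$ and the diagonal, and swapping variables on $x<y$, this becomes
\[\iint_{x>y}\Delta h(x,y)\,[g(x)f(y)-f(x)g(y)]\,d\mu\,d\mu .\]
The diagonal contributes nothing because $\Delta h(x,x)=0$. Now fix $y$ and treat the inner integral in $x$ over $(y,\infty)$. Let $\phi(x)=\Delta h(x,y)$, which is increasing on $x\ge y$ with $\phi(y)=0$, so $\phi\ge 0$ there. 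Write $\phi(x)=\int 1_{\{x\ge z\}}\,d\phi(z)$, i.e. integrate over the increasing function via its level sets. By Fubini the inner integral reduces to showing, for every $z\ge y$ (with the convention $z>y$ handled via $[z,\infty)\cap(y,\infty)$),
\[f(y)\bar G(z^-)-g(y)\bar F(z^-)\ge 0 \quad \mu\text{-a.e. } y,\]
and similarly with strict or closed intervals. Since $\bar G/\bar F$ is increasing, $\bar G(z^-)\bar F(y)\ge \bar F(z^-)\bar G(y)$ for $z>y$. Combining this with Theorem \ref{tgf1}(a), $f(y)\bar G(y)\ge g(y)\bar F(y)$, gives the claim by multiplying the two inequalities. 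In detail: if $\bar F(y)>0$ and $\bar G(y)>0$, we get $f(y)\bar G(z^-)\ge f(y)\bar F(z^-)\bar G(y)/\bar F(y)\ge g(y)\bar F(z^-)$. The degenerate cases where $\bar F(y)=0$ or $\bar G(y)=0$ are handled by noting that $\bar G(y)=0$ forces $\bar F(y)=0$ under the hr order. For (b), the symmetric argument uses Theorem \ref{tgf1}(b) on the region $x<y$, or the reflection above.

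I expect the main obstacle to be the measure-theoretic bookkeeping, which is where this approach avoids approximation. Several issues need care: atoms of $\mu$ in the diagonal split, the choice of $z^-$ versus $z$ in the layer-cake representation, integrability of $h$ (I will assume the expectations exist, or restrict to bounded $h$ and then argue by monotone truncation of $\phi$), and the null sets from Theorem \ref{tgf1}.
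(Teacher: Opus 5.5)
Your part (a) is correct and follows the paper's proof. You make the same reduction to $\iint_{x>y}\Delta h(x,y)[g(x)f(y)-f(x)g(y)]\,d\mu\,d\mu$. Then, for fixed $y$, you combine the monotonicity of $\bar G/\bar F$ with Theorem~\ref{tgf1}(a) at the point $y$. The paper phrases the first ingredient as $X\mid X>y\leq_{\text{st}}Y\mid Y>y$ applied to the increasing function $\Delta h(\cdot,y)$. Your layer-cake reduction to indicators of $[z,\infty)$ and $(z,\infty)$ is that same step written out. Your test function $1_{\{x>t,\,y>s\}}$ supplies the converse, which the paper only cites from M\"uller--Stoyan.

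Part (b), however, has the orientation reversed in both routes you propose for the ``if'' direction. Take $h(x,y)=1_{\{x\le s,\,y\le t\}}$ with $s<t$. On $\{x\le y\}$ with $s<y\le t$ you get $\Delta h(x,y)=1_{\{x\le s\}}$, which is decreasing in $x$, so $h\notin{\cal G}_{rh}$. Consistently with this, the resulting inequality $F(s)G(t)\le G(s)F(t)$ says $G(t)/F(t)\le G(s)/F(s)$, the opposite of $X\leq_{\text{rh}}Y$.

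The fix is to swap the roles of $s$ and $t$: use $h(x,y)=1_{\{x\le t,\,y\le s\}}$ with $s<t$. Then on $\{x\le y\}$ you have $\Delta h(x,y)=-1_{\{x\le s\}}$ for $s<y\le t$ and $0$ otherwise, so $h\in{\cal G}_{rh}$. The inequality now reads $F(t)G(s)\le G(t)F(s)$, which is exactly the rh order.

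The reflection has the same problem. Your $\tilde h(x,y)=-h(-y,-x)$ satisfies $\Delta\tilde h(x,y)=\Delta h(-x,-y)$, which is decreasing in $x$ on $\{x\ge y\}$. So it lands in $-{\cal G}_{hr}$, not ${\cal G}_{hr}$, and the two sign flips only cancel if you apply (a) to $-\tilde h$. The clean map is $\tilde h(x,y)=h(-y,-x)$. Then $\Delta\tilde h(x,y)=-\Delta h(-x,-y)$, which gives a bijection ${\cal G}_{rh}\to{\cal G}_{hr}$, and $E\tilde h(-Y^\star,-X^\star)\le E\tilde h(-X^\star,-Y^\star)$ is literally $Eh(X^\star,Y^\star)\le Eh(Y^\star,X^\star)$.

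With these corrections, your direct ``only if'' argument for (b) goes through. On $\{x<y\}$ the function $-\Delta h(\cdot,y)\ge 0$ is decreasing, and its level sets are $(-\infty,z]$ or $(-\infty,z)$ with $z\le y$. The needed inequality $f(y)G(z)\le g(y)F(z)$ follows from $G(z)F(y)\le F(z)G(y)$ together with Theorem~\ref{tgf1}(b). When $F(y)=0$, use $G\le F$.
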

\begin{proof}
Let us show part (a). Let $X\leq_{\text{hr}} Y$, and take a dominating measure $\mu$ such that both random variables are absolutely continuous with respective densities $f$ and $g$. Take $h\in {\cal G}_{hr}$. In analogous way as in the proof of Theorem 1.9.3 in \cite{mustst}, we have
\begin{equation}E\Delta h(X^\star,Y^\star)=\int_y\int_{x\geq y}\Delta h(x,y) (f(x)g(y)-f(y)g(x))d\mu(x)d\mu(y). \label{demo1}\end{equation}
Our aim is to show that the inner integral is less than or equal to $0$ for all $y\in \mathbb{R}$.  Note that if $\bar{F}(y)=0$, as $\Delta h(y,y)=0$,
\[\int_{x\geq y}\Delta h(x,y) f(x)d\mu(x)=0,\]
and the fact is obvious. Therefore, from now on, we will assume that $\bar{F}(y)>0$.
 Note that, as $X^\star\leq_{\text{hr}}Y^\star$, then $X^\star|X^\star > y\leq_{\text{st}}Y^\star|Y^\star>y$ (cf \cite[p .8]{mustst}). Therefore, as $\Delta h(x,y)$ is increasing in $x$ for $x \geq y $, we have (cf \cite[p. 5]{mustst}) 
\[E[\Delta h(X^\star,y)|X^\star>y]\leq E[\Delta h(Y^\star,y)|Y^\star>y]\]
This, the fact that $\Delta h(x,y)\geq 0, \ x\geq y$ and Theorem \ref{tgf1}(a) imply ($y$ a.e. $\mu$) 
\begin{align}&g(y)\int_{x> y}\Delta h(x,y) f(x)d\mu(x)=g(y)\bar{F}(y)\int_{x> y}\Delta h(x,y)\frac{ f(x)}{\bar{F}(y)}d\mu(x)\nonumber\\&=g(y)\bar{F}(y)E[\Delta h(X^\star,y)|X^\star>y]\leq g(y)\bar{F}(y)E[\Delta h(Y^\star,y)|Y^\star>y]\nonumber\\&=\frac{ g(y)}{\bar{G}(y)}\bar{F}(y)\int_{x> y}\Delta h(x,y)g(x)d\mu(x) \leq \frac{ f(y)}{\bar{F}(y)}\bar{F}(y)\int_{x> y}\Delta h(x,y)g(x)d\mu(x)\nonumber\\& =f(y) \int_{x> y}\Delta h(x,y) g(x)d\mu(x). \label{demo2}\end{align}
(\ref{demo2}) and the fact that $\Delta h(y,y)=0$ imply that the inner integral in (\ref{demo1}) is less than or equal to $0$ ($\mu$ a.e. $y$).  Therefore, 
\[E\Delta h(X^\star,Y^\star)\leq 0.\]
The only if case is as in the proof of Theorem 1.9.3 in \cite{mustst}. This concludes part (a).  Part (b) holds in a similar way.
\end{proof}
\section{Preservation of the hazard rate order and the reversed hazard rate order under summation.} \label{sepres}
Our aim in this Section is to use the hr and rh orders to generalize the concept of increasing failure rate and decreasing reversed hazard rate, as well as some classical results related to these concepts. First of all, we recall the main aging concepts we are going to deal with.  For a thorough study of aging concepts in the continuous case, see, for instance Marshall and Olkin \cite{maolli}. For an extensive study of the DRHR property see, for instance,  \cite{senalo}. For the discrete case, we can mention Nair et al. \cite{nareli}. Notice that usually these concepts are defined for non-negative random variables.  However, for the sake of generality, we will use real random variables, as in the previous Sections and as in \cite{shshst}
\begin{defn}
	Let $X$ be a random variable  with cdf $F$ and survival function $\bar {F}$. We say that $X$ is:
	\begin{itemize}
		\item[(a)]  Increasing failure rate or IFR if $\bar {F}$ is log-concave.
		\item[(b)] Decreasing reversed hazard rate or DRHR if $F$ is log-concave.
	\end{itemize}
	\label{dfrelc}\end{defn}
	It is well-known that IFR random variables are absolutely continuous except, possibly at the right end point of its support (if any). Analogously, DRHR random variables are absolutely continuous except, possibly at the left end point of its support (if any). Moreover, if we denote by $f$ the density of $X$ on $Int(supp(X))$, where $Int(supp(X))$ denotes the interior of the support of $X$, we have, using the notations introduced in (\ref{fairev}) that for all $x\in Int(supp(X))$ (see \cite[Theorems 1.B.38 and 1.B.62]{shshst})
\begin{equation}X \ IFR \ \Leftrightarrow  \ r_{X}(x) \ \text{increases and } X \ DRHR \ \Leftrightarrow  \ \tilde{r}_{X}(x) \ \text{decreases}. \end{equation}
Now, we consider some aging properties for discrete random variables. In an analogous way as in the previous Definition, we will make use of the concept of log-concavity (cf. \cite{nasedi}).   Recall that a sequence of non-negative real numbers $\{a_n\}_{n\in \mathbb{Z}}$ is said to be log-concave if it has no internal zeros and
\begin{equation}a_{n+1} \geq (\leq ) a_n^{\frac {1}{2}}a_{n+2}^{\frac {1}{2}},\quad  n\in \mathbb{Z}.  \label{logcod}\end{equation}

\begin{defn}Let $L$ be an integer-valued random variable. $L$ is said to be
	\begin{itemize}
		\item[(a)] Discrete IFR if the sequence  $\{P(L> n)\}_{n\in \mathbb{Z}}$ is log-concave.
		\item[(b)] Discrete DRHR if the sequence  $\{P(L\leq n)\}_{n\in \mathbb{Z}}$ is log-concave.
	\end{itemize} \label{dfreli}
\end{defn}
	It is readily seen that these concepts imply the monotonocity of the hazard rates and reversed hazard rates. In this case, the random variables are absolutely continuous with respect to the counting measure in $\mathbb{Z}$ and $f(n)=P(L=n)$. And for every $n\in supp(L)$,
\begin{align}&L \ IFR \Leftrightarrow r_{L}(n) \ \text{increases}  \Leftrightarrow  \frac{f(n)}{P(L\geq n)} \ \text{increases and } \label{ifreq}\\ &L \ DRHR \Leftrightarrow \tilde{r}_{L}(n) \ \text{decreases}. \label{drhreq}\end{align}
	The second equivalence in (\ref{ifreq}) is readily seen using (\ref{equiv}).  See also \cite[Lemma 2]{balogc}. The first equivalence in both parts is immediate (see, for instance \cite[pp. 181 and 225]{nareli}). Note that the discrete IFR property is usually defined by means of the second equivalence in (\ref{ifreq}), because it has a more intuitive interpretation.  However, with the use of $r_L(n)$ we unify our proofs in the continuous and discrete cases.
	 
It is well-known that an absolutely continuous random variable $X$ is IFR if and only if $X\leq_{\text{hr}}X+y$ for all $y>0$ (cf. \cite[p. 56]{maolli}). This characterization shows the tight relation between failure rates and hazard rate orders.   It is also known (cf. \cite{blsath}) that if X is an absolutely continuous IFR random variable, then $X\leq_{\text{hr}}X+Y$ for every absolutely continuous non-negative random variable $Y$ (not necessarily IFR).  Our aim in Theorem \ref{thconv} is to generalize the class of distributions verifying this last property. In this generalization, the bivariate characterization of the hazard rate order will play an important role. First of all, we begin with a definition.
\begin{defn} Let $X$ be a random variable.  Let us define the sets
	\begin{align}A_{hr}&:=\{y\geq 0 \text{ such that } X\leq_{\text{hr}} X+y\},\\
	A_{rh}&:=\{y\geq 0 \text{ such that } X\leq_{\text{rh}} X+y\}\end{align}
	We will say that $X$ is $A_{hr}$ monotone in the hazard rate order and $A_{rh}$ monotone in the reversed hazard rate order.\end{defn}
	\begin{rem}	Obviously, $A_{hr}$ or $A_{rh}$ can be the set $\{0\}$. As said before, for an absolutely continuous IFR random variable $X$, $A_{hr}=\mathbb{R}_+$.  This property can be extended to any IFR random variable, due to the log-concavity property of $\bar{F}$. Similarly, we can see that a discrete IFR random variable with unbounded upper support  is $\mathbb{N}$ monotone in the hazard rate order. If it is non-negative and the last endpoint of its support is $n$, $A_{hr}=\{0,1,\dots,n\}\bigcup  (n,\infty)$. Similar considerations can be made for DRHR random variables and the reversed hazard rate order. \label{reset}\end{rem}

	Using the bivariate caracterization of the hazard rate order, we have the following 
	\begin{thm} 
		\begin{enumerate}[(a)]
			\item If $X$ is $A_{hr}$ monotone in the hazard rate order, being $A_{hr}\neq \{0\}$, then $X\leq_{\text{hr}}X+Y$, for any random variable $Y$ independent of $X$ having support in $A_{hr}$. 
				\item If $X$ is $A_{rh}$ monotone in the reversed hazard rate order, being $A_{rh}\neq \{0\}$, then $X\leq_{\text{rh}}X+Y$, for any random variable $Y$ independent of $X$ having support in $A_{rh}$.  \end{enumerate}\label{thconv}\end{thm}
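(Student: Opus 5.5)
The idea is to prove (a) by conditioning on $Y$ and reducing it to the hypothesis $X\leq_{\text{hr}} X+y$ for each fixed $y\in A_{hr}$. The hazard rate order is not closed under mixtures in general, so a naive mixing argument will not work. Instead I would check $X\leq_{\text{hr}} X+Y$ directly from Definition \ref{dfstor}(b): for $s\leq t$ I need
\[
P(X+Y>t)\,P(X>s)-P(X+Y>s)\,P(X>t)\geq 0 .
\]
By independence and Fubini, writing $\nu$ for the law of $Y$ (concentrated on $A_{hr}$), the left-hand side equals
\[
\int \big[\bar F(t-y)\bar F(s)-\bar F(s-y)\bar F(t)\big]\,d\nu(y).
\]
Since $\nu(A_{hr})=1$, it suffices to show that the integrand is nonnegative for every $y\in A_{hr}$. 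For such $y$, $X\leq_{\text{hr}} X+y$ means that $\bar F(x-y)/\bar F(x)$ is increasing in $x$, i.e.\ $\bar F(t-y)\bar F(s)\geq \bar F(s-y)\bar F(t)$ for $s\leq t$. This is exactly the needed pointwise inequality, and integrating over $y$ gives the conclusion.

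Since the section advertises the bivariate characterization, an alternative route is Theorem \ref{thbiva}(a). One takes $h\in{\cal G}_{hr}$, conditions on $Y=y$, and uses $X$ independent of both $X^\star$ and $Y$. The representation above is simpler, though, and I would present it that way, possibly remarking that it is equivalent to the bivariate route.

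The point I would treat most carefully is measurability and the set of $y$ with $\nu$-measure zero. This causes no difficulty: the integrand is jointly measurable (it is monotone in each variable), and $\nu(A_{hr})=1$ by assumption. The hypothesis $A_{hr}\neq\{0\}$ only serves to exclude the trivial case. Note also that the statement is not restricted to $x$ with $\bar F(x)>0$, because the product form of Definition \ref{dfstor}(b) is used throughout.

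Part (b) follows identically, using $F$ in place of $\bar F$:
\[
P(X+Y\leq t)F(s)-P(X+Y\leq s)F(t)=\int\big[F(t-y)F(s)-F(s-y)F(t)\big]\,d\nu(y).
\]
For $y\in A_{rh}$, the relation $X\leq_{\text{rh}} X+y$ gives $F(t-y)F(s)\geq F(s-y)F(t)$ for $s\leq t$, so the integrand is again nonnegative.
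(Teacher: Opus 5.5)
Your proof is correct, and it is more direct than the paper's. The paper proves (a) through the bivariate characterization. For independent copies $X_1,X_2$ of $X$ and any $h\in{\cal G}_{hr}$, Theorem \ref{thbiva}(a) gives $E\Delta h(X_1,X_2+y)\leq 0$ for each $y\in A_{hr}$. Integrating against $dF_Y(y)$ yields $E\Delta h(X_1,X_2+Y)\leq 0$, and Theorem \ref{thbiva}(a) is then applied in the reverse direction. You instead integrate the product-form inequality of Definition \ref{dfstor}(b) directly in $y$.

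Both arguments rest on the same fact: the criterion is linear in the law of the right-hand variable, so it survives mixing over $y$ while $X$ stays fixed. Your opening remark that ``a naive mixing argument will not work'' is therefore a bit misleading. What you do is a one-sided mixture, and it is legitimate precisely because $\bar G(t)\bar F(s)-\bar G(s)\bar F(t)$ is linear in $\bar G$. The paper's proof is the same kind of one-sided mixture, carried out on $E\Delta h$ instead. A small correction on measurability: you do not need joint measurability. With $s\leq t$ fixed, the integrand is a difference of two functions that are monotone in $y$, hence bounded and Borel.

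What your route buys is economy. It uses only Fubini for bounded Borel functions. It bypasses Theorem \ref{thbiva}, and with it Theorem \ref{tgf1} and the dominating-measure machinery. It also avoids any question of whether $Eh(X_1,X_2+Y)$ is well defined for unbounded $h$. The paper's route was chosen to showcase the bivariate characterization, which is the theme of that section. It also carries over verbatim to any order characterized by inequalities of the form $E\Delta h\leq 0$.
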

			\begin{proof} To show part (a), let $X_1$ and $X_2$ be two independent copies of $X$, also independent of $Y$. Then, by Theorem \ref{thbiva} (a), we have
				\[E\Delta h(X_1,X_2+y)\leq 0, \quad \hbox{for all}\quad h \in {\cal G}_{hr},\quad y\in A_{hr}.\]
				Let $F_{Y}$ be the distribution function of $Y$.  Using the previous inequality we have for any $h \in {\cal G}_{hr}$
				\[E\Delta h(X_1,X_2+Y)=\int_{A_{hr}} E\Delta h(X_1,X_2+y)dF_{Y}(y)\leq 0\]
			thus concluding, using again Theorem \ref{thbiva} (a) that $X\leq_{\text{hr}}X+Y$. The proof of part (b) is analogous. \end{proof}
					
			As an immediate consequence of Theorem \ref{thconv} we have the following Corollary. In part (a), we generalize the result given in \cite{blsath} to general random variables $Y$.  Moreover, in part (b), we provide the discrete version of this result.
			\begin{cor}
				\begin{enumerate}[(a)]
					\item An IFR (respectively, DRHR) random variable $X$ verifies that $X\leq_{\text{hr}}X+Y$ (respectively  $X\leq_{\text{rh}}X+Y$), for any non-negative random variable $Y$ independent of $X$.
					\item A discrete random variable $X$, taking values on $\mathbb{N}$, having discrete IFR (respectively, DRHR) verifies that $X\leq_{\text{hr}}X+Y$ (respectively  $X\leq_{\text{rh}}X+Y$), for any random variable $Y$ independent of $X$ and taking values on $\mathbb{N}$.
				\end{enumerate}
			\end{cor}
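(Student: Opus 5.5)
The corollary follows from Theorem \ref{thconv} once we identify the sets $A_{hr}$ and $A_{rh}$. Part (a) needs $\mathbb{R}_+\subseteq A_{hr}$ for IFR $X$ and $\mathbb{R}_+\subseteq A_{rh}$ for DRHR $X$. Part (b) needs $\mathbb{N}\subseteq A_{hr}$ (resp. $A_{rh}$) for discrete IFR (resp. DRHR) $X$ on $\mathbb{N}$. Then any independent $Y$ with support in $\mathbb{R}_+$ (resp. $\mathbb{N}$) has support in the relevant set, this set is not $\{0\}$, and Theorem \ref{thconv} gives $X\leq_{\text{hr}}X+Y$ (resp. $X\leq_{\text{rh}}X+Y$).

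\emph{Identifying the sets in part (a).} Fix $y\geq 0$ and show $\bar F(x-y)/\bar F(x)$ is decreasing in $x$; this is exactly $\bar F_{X+y}(x)/\bar F(x)$ increasing, i.e. $X\leq_{\text{hr}}X+y$. Write $\bar F=e^{\phi}$ with $\phi$ concave and taking values in $[-\infty,0]$. Concavity gives that $\phi(x-y)-\phi(x)$ is nondecreasing in $x$ wherever both terms are finite, which is the required monotonicity. The boundary cases need separate treatment:
\begin{itemize}
\item If $\bar F(x)=0$ then $\bar F(x')=0$ for all $x'\geq x$. With the convention that the ratio is then $+\infty$, or using the cross-multiplied form $\bar G(y')\bar F(x')-\bar G(x')\bar F(y')\geq 0$ of Definition \ref{dfstor}, the inequality holds trivially.
\item If $\bar F(x-y)=0$ then $\bar F(x)=0$ as well, so this case reduces to the previous one.
\end{itemize}
So it is cleanest to check the product form directly: for $x\leq x'$,
\[\bar F(x'-y)\bar F(x)\geq \bar F(x-y)\bar F(x').\]
When all four values are positive this follows from concavity of $\phi$ applied to the points $x-y\leq \min(x,x'-y)\leq \max(x,x'-y)\leq x'$. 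Otherwise the right-hand side vanishes. The DRHR case is the mirror image: $F(x)F(x'-y)\geq F(x-y)F(x')$ follows from log-concavity of $F$ in the same way. Alternatively, reduce to the IFR case via $X\leq_{\text{rh}}Y\Leftrightarrow -Y\leq_{\text{hr}}-X$, as in the proof of Theorem \ref{tgf1}(b). This recovers the claim already made in Remark \ref{reset}.

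\emph{Identifying the sets in part (b).} The same argument applies to integer arguments. The sequence $a_n=P(X>n)$ is log-concave with no internal zeros, so $a_{n-k}a_{m}\leq a_{m-k}a_n$ for $n\leq m$ and $k\in\mathbb{N}$. This comes from chaining the two-term inequalities (\ref{logcod}), which make $a_{n+1}/a_n$ nonincreasing on the support. Since $X+k$ is integer-valued, the hr comparison of $X$ and $X+k$ only has to be checked on integer arguments, because both survival functions are constant on each interval $[n,n+1)$. The DRHR case is analogous with $P(X\leq n)$.

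The main obstacle is the careful handling of zeros: the upper endpoint of the support for IFR (and the lower endpoint for DRHR), together with the possible atom there. These are exactly the places where the ratio formulation breaks down. I would avoid that difficulty by working throughout with the product inequalities of Definition \ref{dfstor}.
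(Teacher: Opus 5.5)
Your proposal is correct and follows the paper's route. The corollary is read off from Theorem \ref{thconv} once one knows $A_{hr}=\mathbb{R}_+$ (resp.\ $A_{rh}=\mathbb{R}_+$) for IFR (resp.\ DRHR) $X$, and $\mathbb{N}\subseteq A_{hr}$ (resp.\ $A_{rh}$) in the discrete case; the paper only asserts this in Remark \ref{reset}, whereas you verify it through the product inequalities of Definition \ref{dfstor}. One slip: in the opening phrase of your part (a) argument, $\bar F(x-y)/\bar F(x)$ must be shown nondecreasing (not decreasing) in $x$, which is what your concavity step and product inequality actually establish.
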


\begin{example} Let us consider a $\mathbb{Z}$-valued random variable having discrete IFR. Obviously, $2X$ does not have discrete IFR.  However it can be readily seen that $\{2i, \ i\in \mathbb{N}\}\subseteq A_{hr}$. Thus, applying Theorem \ref{thconv} (a), $2X\leq_{\text{hr}} 2X+Y$ if $Y$ is independent of $X$ and takes values on $\{2i, \ i\in \mathbb{N}\}$. In an analogous way we would have $X\leq_{\text{hr}} X+Y$ if $X$ is a discrete random variable whose hazard rate is increasing on the even numbers, and also increasing on the odd numbers. \end{example}
It is obvious that if $X$ is $A_{hr}$ monotone in the hazard rate order, then $X+Y$ does not, necessarily, inherit this property. Consider the case $X=0$ (IFR) and $Y\neq 0$ having a decreasing failure rate.  Then, $X+Y=Y\neq{0}$ has a decreasing failure rate, and therefore it is $\{0\}$ monotone in the hazard rate order. Thus, we will need additional conditions on $Y$ to extend Theorem \ref{thconv} to convolutions of higher order.

First of all, observe that the sets $A_{hr}$ and $A_{rh}$ verify the following property
\begin{equation} x, y \in A_{hr}\Rightarrow x+y \in A_{hr}\quad \hbox{and}\quad x, y \in A_{rh}\Rightarrow x+y \in A_{rh}.\label{conrh}\end{equation}
This is due to the fact that $X\leq_{\text{hr}} X+x \Rightarrow X+y\leq_{\text{hr}} X+x+y$, due to the preservation of the hr order under increasing transforms (see \cite[Thm. 1.3.7.]{mustst}).  Thus, $X\leq_{\text{hr}} X+y \leq_{\text{hr}} X+x+y$ and the transitivity of the hr order implies $X\leq_{\text{hr}} X+x+y$, which proves (\ref{conrh}) for the hr order.  The proof for the rh order is similar. Moreover, a simple application of (\ref{conrh}), together with the closure of the hr and rh orders under weak convergence (see \cite[Thm. 1.3.5.]{mustst}), provides us that if $X$ and $Y$ are independent,
\begin{equation}supp(X)\subseteq A_{hr},\  supp(Y)\subseteq  A_{hr} \Rightarrow supp(X+Y)=\overline{supp(X)+supp(Y)}\subseteq  A_{hr} \label{conrs}.\end{equation}
and similar considerations can be made for the rh order.
However, observe that $x, y \in A_{hr},\ x<y$ does not necessarily imply $y-x \in A_{hr}$. In order to obtain our generalization we will need this property.  Thus we present the following definition
\begin{defn}Let $X$ be a random variable having support on $\overline{G}$, the closure of an additive group $G\subseteq\mathbb{R}$. We will say that $X$ is $G$-IFR (respectively $G$-DRHR) if $X+x\leq_{\text{hr}}X+y$ (respectively $X+x\leq_{\text{rh}}X+y$) for all $x, y$ in $G$ such that $0\leq x\leq y$. \label{dfgclas} \end{defn}
Absolutely continuous IFR random variables are a particular case of this definition, taking $G=\mathbb{R}$.  (cf. \cite[Thm. 1.B.38]{shshst}).  As in Remark \ref{reset}, this
 property can be extended to any IFR random variable, due to the log-concavity property of $\bar{F}$.  If we take $G=\mathbb{Z}$, we obtain the class of discrete increasing failure rates. Moreover, $G$ can be a lattice of points, or a dense set in $\mathbb{R}$ such as the rational numbers $\mathbb{Q}$. 

In Definition \ref{dfgclas} we included random variables having support in $\overline{G}$ in order to include convolutions into the same class. Moreover, we only assumed increasingness in the hazard rate (reversed hazard rate) in $G\cap [0,\infty)$.  However, using the closure of the hazard rate order under increasing transforms and weak convergence we can extend this result to  $\overline{G}$.

\begin{lem} Let $G\subseteq\mathbb{R}$ be an additive group and let $X$ be a $G$-IFR  random variable  (respectively $G$-DRHR).  Then $X+x\leq_{\text{hr}}X+y$ (respectively $X+x\leq_{\text{rh}}X+y$) for all $x, y$ in $\overline{G}$ such that $x\leq y$.\label{lextgf}\end{lem}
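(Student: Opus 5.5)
The argument has two stages. First we pass from nonnegative shifts in $G$ to arbitrary shifts $x\le y$ in $G$, using translation invariance. Then we pass from $G$ to $\overline{G}$ by weak convergence. We give the argument for the hr order; the rh case is identical, since the rh order is also preserved under increasing transforms and closed under weak convergence (\cite[Thms. 1.3.5 and 1.3.7]{mustst}).

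\textbf{Step 1 (all of $G$).} Take $x\le y$ in $G$. Since $G$ is a group, $0\le y-x\in G$, so Definition \ref{dfgclas} applied with the pair $0\le y-x$ gives $X\leq_{\text{hr}}X+(y-x)$. The map $t\mapsto t+x$ is increasing, so preservation of the hr order under increasing transforms (\cite[Thm. 1.3.7]{mustst}) yields $X+x\leq_{\text{hr}}X+y$. Negative values of $x$ and $y$ are allowed here, because only the difference $y-x\ge 0$ is used.

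\textbf{Step 2 (closure).} Take $x\le y$ in $\overline{G}$ and choose sequences $x_n,y_n\in G$ with $x_n\to x$ and $y_n\to y$. We need $x_n\le y_n$ for every $n$.
\begin{itemize}
\item If $x<y$, this holds for large $n$ automatically.
\item If $x=y$, the claim is trivial, since $X+x\leq_{\text{hr}}X+x$.
\end{itemize}
By Step 1, $X+x_n\leq_{\text{hr}}X+y_n$ for all large $n$. Since $X+x_n\to X+x$ and $X+y_n\to X+y$ almost surely, hence weakly, closure of the hr order under weak convergence (\cite[Thm. 1.3.5]{mustst}) gives $X+x\leq_{\text{hr}}X+y$.

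\textbf{Main obstacle.} The only delicate point is the weak-convergence step. The closure theorem in \cite{mustst} may require care when limits have atoms or when supports are bounded, because ratios of survival functions can involve $0/0$. If needed, I would verify the closure directly from the product form of the hr order: $\bar G(s)\bar F(t)\le \bar G(t)\bar F(s)$ for $t\le s$. Here $\bar F_n(t)=\bar F(t-x_n)$ and $\bar G_n(t)=\bar F(t-y_n)$ converge at all continuity points of the limits. Right-continuity of survival functions then extends the inequality from continuity points to all $t\le s$.
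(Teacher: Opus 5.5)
Your proof is correct and follows the same route as the paper. You first obtain $X\leq_{\text{hr}}X+(y-x)$ from the $G$-IFR property and translate it by $x$ using preservation under increasing transforms, then pass to $\overline{G}$ by approximation and closure under weak convergence; your explicit treatment of $x_n\le y_n$ via the cases $x<y$ and $x=y$ is slightly more careful than the paper's sketch. One slip in your side remark: with $\bar F,\bar G$ the survival functions of the smaller and larger variable, the product form of the hr order for $t\le s$ is $\bar G(t)\bar F(s)\le \bar G(s)\bar F(t)$, not the reverse, but the limiting argument does not depend on the direction.
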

\begin{proof} First of all, let us assume that $x,y\in G$.  Observe that if $x\leq y$, then $y-x\geq 0$. As $G$ is an additive group $y-x\in G$. Thus, the $G$-IFR property of $X$ implies $X\leq_{\text{hr}}X+y-x$. As the hazard rate order is preserved under increasing transforms, we add $x$ in both sides of the inequality, thus obtaining that $X+x\leq_{\text{hr}}X+y$. If either $x\not \in G$ or $y\not \in G$, we would use an approximation procedure and the closure of the hazard rate order under weak convergence (for instance, if $x\not \in G$, we take a sequence of points $x_n\leq x$ in $G$ to approximate $x$ and use $X+x_n$ to approximate $X+x$).  The proof for the reversed hazard rate is similar.\end{proof}
\begin{rem} For absolutely continuous random variables, it is well-known (cf. \cite[Thm. 1.B.38]{shshst}) that 
	\[X+x\leq_{\text{hr}}X+y \hbox{ for all $x, y$ in $\mathbb{R}$ such that $x\leq y$} \Leftrightarrow X \hbox{ is IFR}.\]
The previous Lemma shows us that there are also discrete random variables satisfying the left hand side part of the previous assertion.  For instance $\mathbb{Q}$-valued random variables being $\mathbb{Q}$-IFR, as $\overline{\mathbb{Q}}=\mathbb{R}.$
On the other hand, observe that Lemma \ref{lextgf} establishes equivalence classes for the $G$-IFR or $G$-DRHR properties, as provides the same result for  additive groups $G_1$ and $G_2$ such that $\overline{G}_1=\overline{G}_2$. Thus, from now on, we will speak of $X$ being in the $\overline{G}$ IFR or DRHR class, when Definition \ref{dfgclas} holds.

Moreover, according to the classification of additive subgroups in $\mathbb{R}$ (cf. \cite[Lemma 2.2.]{abmato}), we have only two possible situations for $G$:  
\begin{enumerate}[1)]
	\item $G$ is a dense set in $\mathbb{R}$. In this case, we have $\overline{G}=\mathbb{R}$.
	\item $G$ is a lattice of points of period $d$.  In this case $\overline{G}=G$.
	\end{enumerate} 
Thus the main advantage of the following results, is that we unify properties in the discrete and continuous case, and also that the $\mathbb{R}$-IFR or $\mathbb{R}$-DRHR classes are not only composed of classical IFR or DRHR random variables.  \end{rem}
The next result shows us the closure of the $\overline{G}$-IFR class under convolution. For a proof for IFR random variables, see for instance \cite[Thm. C.4.]{maolli}. Our proof relies on a classical result of preservation of the $hr$ order under mixtures (see \cite[Theorem 1.B.14]{shshst}).

	\begin{thm} Let $X_1,\dots,X_n$ be independent random variables.
	\begin{enumerate}[(a)] \item If $X_1,\dots,X_n$ are in the $\overline{G}$-IFR class, then $X_1+\dots+X_n$ is in the $\overline{G}$-IFR class.
\item If $X_1,\dots,X_n$ are in the $\overline{G}$-DRHR class, then $X_1+\dots+X_n$ is in the $\overline{G}$-DRHR class.     \end{enumerate}\label{thclco}\end{thm}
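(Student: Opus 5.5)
The plan is to reduce to $n=2$ by induction and use the paper's hint, namely preservation of the hr order under mixtures (Theorem 1.B.14 of Shaked and Shanthikumar). That result says that if $Z_\theta\leq_{\text{hr}} W_\theta$ for every $\theta$, and the family $\{Z_\theta\}$ (or $\{W_\theta\}$) is increasing in $\theta$ in the hr order, then the mixtures satisfy $Z_\Theta\leq_{\text{hr}} W_\Theta$.

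The induction reduction is routine. The sum $X_1+\dots+X_n$ has support in $\overline{G}$ by (\ref{conrs})-type reasoning, since $\overline{G}+\overline{G}\subseteq\overline{G}$ as $G$ is a group. So it suffices to show that if $X$ and $Y$ are independent and both in the $\overline{G}$-IFR class, then for $x,y\in G$ with $0\leq x\leq y$,
\[X+Y+x\leq_{\text{hr}}X+Y+y .\]

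For the case $n=2$, condition on $Y=t$, with $t\in\overline{G}$ ranging over the support of $Y$. Set $Z_t:=X+x+t$ and $W_t:=X+y+t$.
\begin{itemize}
\item By Lemma \ref{lextgf} (with $x+t\leq y+t$ in $\overline{G}$), $Z_t\leq_{\text{hr}}W_t$ for every $t$.
\item Again by Lemma \ref{lextgf}, $t\mapsto Z_t$ is increasing in the hr order along $\overline{G}$. The same holds for $t\mapsto W_t$.
\end{itemize}
Mixing over $t$ with the distribution of $Y$, the mixture preservation theorem gives $X+Y+x\leq_{\text{hr}}X+Y+y$. For this step I need the version of Theorem 1.B.14 that requires monotonicity of only one family, or of both, in $\theta$; both hold here.

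The main obstacle is checking that the mixture theorem applies in full generality, since it is stated for general distributions, including discrete $Y$ and point masses. If that is doubtful, I would instead prove the $n=2$ step via Theorem \ref{thbiva}(a), following the style of Theorem \ref{thconv}. For $h\in{\cal G}_{hr}$, take $X_1,X_2$ independent copies of $X$ and $Y_1,Y_2$ independent copies of $Y$. Conditioning on $(Y_1,Y_2)=(s,t)$ gives $E\Delta h(X_1+s+x,\,X_2+t+y)$. This is handled by symmetrising in $(s,t)$ and using that $X+x+s\leq_{\text{hr}}X+y+t$ whenever $x+s\leq y+t$. The pairs with $x+s>y+t$ are the delicate ones and would need a pairing argument with the swapped pair $(t,s)$, which is exactly what the mixture theorem encapsulates.

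The rh part (b) follows identically, using the rh analogue of the mixture result (Theorem 1.B.52-type) together with Lemma \ref{lextgf} for the rh order. Alternatively, it follows by passing to $-X$, since $X\leq_{\text{rh}}Y\Leftrightarrow -Y\leq_{\text{hr}}-X$ and $-G=G$, so $\overline{G}$-DRHR for $X$ corresponds to $\overline{G}$-IFR for $-X$.
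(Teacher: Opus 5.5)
Your proposal has a genuine gap in the mixing step. You invoke a mixture result with a common mixing law, $Z_t\leq_{\text{hr}}W_t$ for every $t$, and one or both families hr-increasing in $t$. That result is false. Take $X\equiv 0$, which lies in every $\overline{G}$-IFR class (the paper uses this in the proof of Corollary \ref{cosum}). Then $Z_t$ and $W_t$ are point masses at $x+t$ and $y+t$. They satisfy $Z_t\leq_{\text{hr}}W_t$, and both increase in $t$ in the hr order. Yet your conclusion becomes $x+Y\leq_{\text{hr}}y+Y$, which fails for $x<y$ when $G=\mathbb{R}$ and $Y$ is a mixture of two exponentials: its hazard rate is strictly decreasing, so $\bar F_Y(u-y)/\bar F_Y(u-x)$ strictly decreases for $u>y$.

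The root cause is that your argument never uses that $Y$ is $\overline{G}$-IFR, only that its support lies in $\overline{G}$. If it were valid, it would prove that $X+Y$ is $\overline{G}$-IFR for every $Y$ supported on $\overline{G}$, which the same example refutes. Your fallback via Theorem \ref{thbiva} has the same defect. The pairs $(s,t)$ and $(t,s)$ carry equal weight under the i.i.d. law of $(Y_1,Y_2)$. A symmetrisation of that kind therefore cannot use any aging property of $Y$, so it cannot control the pairs with $x+s>y+t$.

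The missing idea is to put the hr comparison into the mixing laws rather than into the family.
\begin{enumerate}[(i)]
\item Regard $X+Y+x$ and $X+Y+y$ as mixtures of the single family $\{Y+\theta\}_{\theta\in\overline{G}}$, with mixing laws those of $X+x$ and $X+y$. Both mixing laws are supported in $\overline{G}$.
\item The family is hr-increasing in $\theta$ on $\overline{G}$ by Lemma \ref{lextgf} applied to $Y$; this is (\ref{mixco2}).
\item The mixing laws are hr-ordered, $X+x\leq_{\text{hr}}X+y$, by the induction hypothesis on $X$; this is (\ref{mixco1}).
\item Apply the mixture result in the form the paper uses, here and in Proposition \ref{promas} (\cite[Theorem 1.B.14]{shshst}): if a family is increasing in the hr order in $\theta$ and $\Theta_1\leq_{\text{hr}}\Theta_2$, the corresponding mixtures are hr-ordered.
\end{enumerate}
This gives $X+Y+x\leq_{\text{hr}}X+Y+y$ and uses both IFR hypotheses; it is exactly the paper's argument. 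Your reduction of part (b) to part (a) via $-X$ is fine once part (a) is repaired this way.
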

\begin{proof} First of all, note that using a similar procedure as in (\ref{conrs}), we deduce that the support of $X_1+\dots+X_{k}$ is in $\overline{G}$ for $k=2,\dots,n$. To show part (a), let us proceed by induction.  The case $n=1$ is obvious by hypothesis.  Now, assume that $X_1+\dots+X_{n-1}$ is in the $\overline{G}$-IFR class. Then, we have
	\begin{equation}X_1+\dots+X_{n-1}+x\leq_{\text{hr}} X_1+\dots+X_{n-1}+y \hbox{ for $x, y$ in $G$ such that $0\leq x\leq y$}.\label{mixco1}\end{equation}
	Moreover, as $X_n$ is in the $\overline{G}$-IFR class, using Lemma \ref{lextgf}, we have
	\begin{equation}X_n+x\leq_{\text{hr}}X_n+y,\quad \hbox{for all $x,y \in \overline{G}$}. \label{mixco2} \end{equation}
On the other hand we can prove, using a similar procedure as in (\ref{conrh}) that the supports of both $X_1+\dots+X_{n-1}+x$ and $X_1+\dots+X_{n-1}+y$ in (\ref{mixco1}) are both in $\overline{G}$.  Thus, by (\ref{mixco1}) and (\ref{mixco2}) and applying \cite[Theorem 1.B.14]{shshst}, we have
\[X_1+\dots+X_{n}+x\leq_{\text{hr}} X_1+\dots+X_{n}+y \hbox{ for $x, y$ in $G$ such that $0\leq x\leq y$},\]
thus completing part (a). The proof of part (b) is similar.\end{proof}

A classical result in stochastic orders is that if $(X_i,Y_i)$, $i=1,n$ are independent pairs of random variables such that $X_i\leq_{\text{hr}}Y_i$, $i=1,\dots, n$, and $X_1,\dots,X_n,Y_1,\dots, Y_n$ are all IFR (except, possibly one $X_j$ and one $Y_l$, with $j\neq l$), then $X_1+\dots+X_n\leq_{\text{hr}}Y_1+\dots+Y_n$. See, for instance, \cite[Thm. 1.B.4]{shshst}. We are going to extend this result to random variables in the $\overline{G}$-IFR class, using a similar technique as in the proof of Theorem \ref{thclco}. 

	\begin{thm} Let $(X_i,Y_i)$, $i=1,2,\dots,n$ be independent pairs of 
		random variables. 
	\begin{enumerate}[(a)] \item If $X_1,\dots,X_n,Y_1,\dots, Y_n$ are in the $\overline{G}$-IFR class (except, possibly, one $X_j$ and one $Y_l$ with $j\neq l$), and $X_i\leq_{\text{hr}} Y_i$, $i=1,2,\dots,n$, then $X_1+\dots+X_{n}\leq_{\text{hr}}  Y_1+\dots+Y_{n}$.
		\item If $X_1,\dots,X_n,Y_1,\dots, Y_n$ are in the $\overline{G}$-DRHR class (except, possibly, one $X_j$ and one $Y_l$ with $j\neq l$),
		 and $X_i\leq_{\text{rh}} Y_i$, $i=1,2,\dots,n$, then $X_1+\dots+X_{n}\leq_{\text{rh}}  Y_1+\dots+Y_{n}$.     \end{enumerate}\label{thpcon}\end{thm}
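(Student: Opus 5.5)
We reduce to the case $n=2$ and then induct, mirroring the proof of Theorem \ref{thclco}. The basic tool is again the preservation of the hr order under mixtures, \cite[Theorem 1.B.14]{shshst}. In the form used above it says the following. Suppose $\{U(\theta)\}$ and $\{V(\theta)\}$ are families with $U(\theta)\leq_{\text{hr}}V(\theta')$ whenever $\theta\leq\theta'$ in the relevant index set. Suppose also that each family is increasing in the hr order, and that $\Theta_1\leq_{\text{hr}}\Theta_2$. Then $U(\Theta_1)\leq_{\text{hr}}V(\Theta_2)$. The families we will use are translations $Z+\theta$ of a fixed variable $Z$. Lemma \ref{lextgf} says that if $Z$ is in the $\overline{G}$-IFR class, then $Z+x\leq_{\text{hr}}Z+y$ for all $x\leq y$ in $\overline{G}$. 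The support considerations of (\ref{conrs}) guarantee that every variable whose value is used as a shift has support in $\overline{G}$.

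For $n=2$, assume first that the exceptional indices are $j=1$ and $l=2$, so $X_2$ and $Y_1$ are in the $\overline{G}$-IFR class. We pass through the intermediate sum $X_1+Y_2$, or equivalently compare in two steps.
\begin{itemize}
\item First step: $X_1+X_2\leq_{\text{hr}}X_1+Y_2$. View both sides as mixtures over the value of $X_1$. The family $X_2+\theta$ is hr-increasing in $\theta\in\overline{G}$ because $X_2$ is $\overline{G}$-IFR. However, $Y_2+\theta$ need not be hr-increasing, since $Y_2$ may be exceptional. So we instead view both sides as mixtures over the summand $X_2$ versus $Y_2$, with $X_1$ playing the role of the translated variable. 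The family $\{X_1+\theta\}$ need not be monotone either, since $X_1$ is exceptional. To avoid this, we order the steps so that the shifted variable is always IFR.
\item The correct chain is therefore
\[X_1+X_2\leq_{\text{hr}}Y_1+X_2\leq_{\text{hr}}Y_1+Y_2.\]
For the first inequality, take $X_2$ as the base variable and the families $X_2+\theta$ on both sides. This family is hr-increasing by Lemma \ref{lextgf}, the mixing variables satisfy $X_1\leq_{\text{hr}}Y_1$, and the supports of $X_1,Y_1$ lie in $\overline{G}$. Theorem 1.B.14 of \cite{shshst} then gives $X_2+X_1\leq_{\text{hr}}X_2+Y_1$. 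For the second inequality, take $Y_1$ (IFR) as the base variable and mix over $X_2\leq_{\text{hr}}Y_2$ in the same way. Transitivity concludes.
\end{itemize}
If instead $X_2$ and $Y_1$ are the exceptional ones, we use the chain through $X_1+Y_2$ in the same manner. If there are no exceptions, either chain works. The requirement $j\neq l$ is exactly what ensures that one of the two chains has an IFR base variable at each step.

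For general $n$, we induct as in Theorem \ref{thclco}. Set $S_{n-1}=X_1+\dots+X_{n-1}$ and $T_{n-1}=Y_1+\dots+Y_{n-1}$. Apply the induction hypothesis to the $n-1$ pairs that contain the exceptional indices, relabelling so that both $j$ and $l$ lie among the first $n-1$ indices when $n\geq 3$. This gives $S_{n-1}\leq_{\text{hr}}T_{n-1}$. The partial sums of the non-exceptional variables are in the $\overline{G}$-IFR class by Theorem \ref{thclco}. The main obstacle is that $S_{n-1}$ and $T_{n-1}$ may contain an exceptional summand and hence fail to be $\overline{G}$-IFR. We handle this by grouping rather than by plain induction:
\begin{itemize}
\item put $U=X_j+\sum_{i\neq j,l}X_i$ and $U'=Y_j+\sum_{i\neq j,l}Y_i$;
\item put $W=X_l$ and $W'=Y_l$.
\end{itemize}
Then $Y_j$ and $X_l$ are IFR. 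Within the first block, only $X_j$ is exceptional, and the case $n=2$ chain combined with Theorem \ref{thclco} shows $U\leq_{\text{hr}}U'$ with $U'$ $\overline{G}$-IFR. We then apply the two-variable case to the pairs $(U,U')$ and $(W,W')$, where only $U$ and $W'$ may be exceptional and these belong to different pairs. This delicate bookkeeping, showing that the grouped sums retain the $\overline{G}$-IFR property needed for the base-variable role, is the step I expect to require the most care.

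Part (b) follows identically, using the rh version of the mixture result \cite[Theorem 1.B.14]{shshst} together with the rh parts of Lemma \ref{lextgf} and Theorem \ref{thclco}. Alternatively, it follows from part (a) applied to $-X_i$ and $-Y_i$, since $X\leq_{\text{rh}}Y\Leftrightarrow -Y\leq_{\text{hr}}-X$, $-G=G$, and $\overline{G}$-DRHR for $X$ corresponds to $\overline{G}$-IFR for $-X$.
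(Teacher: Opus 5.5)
Your proposal is correct and is essentially the paper's proof. The paper uses the same two-step chain $X_1+\dots+X_{k-1}+X_k\leq_{\text{hr}}Y_1+\dots+Y_{k-1}+X_k\leq_{\text{hr}}Y_1+\dots+Y_k$ via \cite[Theorem 1.B.14]{shshst}, where the hr-increasing translation family comes from the $\overline{G}$-IFR variable $X_k$ and from $Y_1+\dots+Y_{k-1}$ (Lemma \ref{lextgf} and Theorem \ref{thclco}), after ordering the pairs so that the exceptional $X$ comes first and the exceptional $Y$ last. Your grouping into $(U,U')$ and $(X_l,Y_l)$ is exactly that ordering in disguise: $U\leq_{\text{hr}}U'$ is just the induction hypothesis for the $n-1$ pairs other than the $l$-th, and $U'$ is $\overline{G}$-IFR by Theorem \ref{thclco}, so no delicate bookkeeping is needed and your abandoned false starts (the first bullet of the $n=2$ case and the first relabelling attempt) can simply be deleted.
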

\begin{proof} To show 
	part (a), assume that in the first pair $Y_1$ is in $\overline{G}$-IFR class, in the last pair $X_n$ is in $\overline{G}$-IFR class, and $X_2,\dots,X_{n-1},Y_2,\dots, Y_{n-1}$ are all in $\overline{G}$-IFR class (if not, we can reorganize the pairs in order to get this). Now, let us proceed by induction on $k\leq n$.  The case $k=1$ is obvious by hypothesis. Assume that 
	\begin{equation}X_1+\dots+X_{k-1}\leq_{\text{hr}} Y_1+\dots+Y_{k-1},\quad k\leq n.\label{mixco3}\end{equation} Observe that, as $X_k$ is in $\overline{G}$-IFR class, (\ref{mixco2}) holds.
	 Thus, applying (\ref{mixco2}), (\ref{mixco3}) and \cite[Theorem 1.B.14]{shshst}, we obtain that
	\begin{equation}X_1+\dots+X_{k-1}+X_k\leq_{\text{hr}}Y_1+\dots+Y_{k-1}+X_k\label{mixco4}\end{equation}
Moreover, using Theorem \ref{thclco} (a) $Y_1+\dots+Y_{k-1}$ is in $\overline{G}$-IFR class, and then (\ref{mixco2}) holds, that is
	\begin{equation}Y_1+\dots+Y_{k-1}+x\leq_{\text{hr}}Y_1+\dots+Y_{k-1}+y,\quad \hbox{for all $x,y \in \overline{G}$}. \label{mixco5} \end{equation}
Applying now (\ref{mixco5}), the fact that $X_k\leq_{\text{hr}} Y_k$ and \cite[Thm. 1.B.14]{shshst}, we obtain that
	\begin{equation}Y_1+\dots+Y_{k-1}+X_k\leq_{\text{hr}} Y_1+\dots+Y_{k-1}+Y_k.\label{mixco6}\end{equation}
	The conclusion follows by (\ref{mixco4}), (\ref{mixco6}) and the transitivity of the hr order. The proof of part (b) is similar.
\end{proof}
As an immediate application of the previous result, we have the following
	\begin{cor} Let $(X_i,Y_i)$, $i=1,2,\dots,n$ be independent pairs of 
	random variables.  Let $Y_{n+1},\dots Y_m$, $m>n$, be non-negative independent random variables, also independent of the previous pairs. 
	\begin{enumerate}[(a)] \item If $X_1,\dots,X_n,Y_1,\dots, Y_m$ are in the $\overline{G}$-IFR class (except, possibly, one $X_j$ and one $Y_l$ with $j\neq l$) and $X_i\leq_{\text{hr}} Y_i$, $i=1,2,\dots,n$, then $X_1+\dots+X_{n}\leq_{\text{hr}}  Y_1+\dots+Y_{m}$.
		\item If $X_1,\dots,X_n,Y_1,\dots, Y_m$ are in the $\overline{G}$-DRHR class (except, possibly, one $X_j$ and one $Y_l$ with $j\neq l$), and $X_i\leq_{\text{rh}} Y_i$, $i=1,2,\dots,n$, then $X_1+\dots+X_{n}\leq_{\text{rh}}  Y_1+\dots+Y_{m}$.     \end{enumerate}\label{cosum}\end{cor}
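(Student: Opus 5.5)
The plan is to reduce Corollary \ref{cosum} to Theorem \ref{thpcon} by inserting the extra summands on the $X$ side as degenerate variables. I treat part (a) in detail; part (b) is identical with rh in place of hr, Theorem \ref{thpcon}(b) in place of (a), and the DRHR analogue of Theorem \ref{thconv}.

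The natural first step is to set $S=Y_{n+1}+\dots+Y_m$. This $S$ is non-negative and independent of the pairs. By Theorem \ref{thclco}(a), it lies in the $\overline{G}$-IFR class, since $Y_{n+1},\dots,Y_m$ are in that class. At most one $Y_l$ is an exception, and if $l>n$ I handle it separately below. Since $S$ has support in $\overline{G}\cap[0,\infty)$, the point mass $\delta_0$ is in the $\overline{G}$-IFR class trivially ($0+x\leq_{\text{hr}}0+y$ for $x\leq y$). Also $0\leq_{\text{hr}}S$, because $S$ is non-negative: $P(S>t)/P(0>t)$ is increasing where it is defined.

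Next I form the $n+1$ pairs $(X_1,Y_1),\dots,(X_n,Y_n),(0,S)$. These are independent pairs satisfying $X_i\leq_{\text{hr}}Y_i$ and $0\leq_{\text{hr}}S$. All components are in the $\overline{G}$-IFR class except possibly one $X_j$ and one $Y_l$ with $j\neq l\leq n$. Theorem \ref{thpcon}(a) then gives
\[X_1+\dots+X_n=X_1+\dots+X_n+0\leq_{\text{hr}}Y_1+\dots+Y_n+S=Y_1+\dots+Y_m.\]

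The main obstacle is the case where the exceptional $Y_l$ has $l>n$, so that $S$ need not be $\overline{G}$-IFR. Then every $Y_1,\dots,Y_n$ is $\overline{G}$-IFR. First I would apply Theorem \ref{thpcon}(a) to the $n$ original pairs, which gives $X_1+\dots+X_n\leq_{\text{hr}}Y_1+\dots+Y_n=:T$. By Theorem \ref{thclco}(a), $T$ is in the $\overline{G}$-IFR class. By Lemma \ref{lextgf}, every $s\in\overline{G}\cap[0,\infty)$ belongs to $A_{hr}(T)$. Now $S$ is non-negative and has support in $\overline{G}$ by (\ref{conrs})-type reasoning. So Theorem \ref{thconv}(a) yields $T\leq_{\text{hr}}T+S$, and transitivity of $\leq_{\text{hr}}$ concludes.

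The first route via the pair $(0,S)$ is cleaner when it applies. The second route works in general, because it only needs $Y_1,\dots,Y_n$ to be $\overline{G}$-IFR and the extra $Y$'s to be supported in $\overline{G}\cap[0,\infty)$. So the safest plan is to use the second route throughout. It applies directly when the exceptional $Y_l$ has $l>n$. When $l\leq n$, I fall back on the pair $(0,S)$.
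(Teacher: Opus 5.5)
Your proof is correct, with one caveat that applies equally to the paper's proof (see below). It is, however, longer than necessary.

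\textbf{Comparison with the paper.} The paper does not bundle the extra summands. It appends the $m-n$ pairs $(0,Y_{n+1}),\dots,(0,Y_m)$ one by one and applies Theorem \ref{thpcon}(a) once to all $m$ pairs. The degenerate variable $0$ lies in every $\overline{G}$-IFR class, and $0\leq_{\text{hr}}Y_k$ for non-negative $Y_k$. So the exceptional $Y_l$ may sit in any of the $m$ pairs. If $l>n$, its partner is $X_l=0$, which is not exceptional, so $j\neq l$ holds automatically.

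Your ``main obstacle'' comes only from collapsing $Y_{n+1}+\dots+Y_m$ into a single pair $(0,S)$. That forces you to need $S$ itself to be $\overline{G}$-IFR, via Theorem \ref{thclco}. This can fail when $l>n$, which is why you need the second argument through Theorem \ref{thclco}, Lemma \ref{lextgf} and Theorem \ref{thconv}.

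The paper's route is uniform and uses Theorem \ref{thpcon} alone. Your second route does buy something the paper's does not: when $Y_1,\dots,Y_n$ are all $\overline{G}$-IFR, the extra summands $Y_{n+1},\dots,Y_m$ need no aging property at all, only non-negativity and support in $\overline{G}$. Your remark that this route ``works in general'' is inaccurate, since it needs $Y_1,\dots,Y_n$ to be $\overline{G}$-IFR and so fails when $l\leq n$. Your final case split nevertheless covers both cases.

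\textbf{The caveat.} In the second case you assert that $S$ is supported in $\overline{G}$ ``by (\ref{conrs})-type reasoning''. This requires the exceptional $Y_l$ to be supported in $\overline{G}$. The hypotheses do not literally give this, because $Y_l$ is not assumed to belong to the class.

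It must be read as a standing assumption, and the paper's proof needs it too. In Theorem \ref{thpcon}, the step giving (\ref{mixco6}) applies (\ref{mixco5}), which is available only for shifts in $\overline{G}$, with the random shift $Y_l$.

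Without this assumption the statement fails. Take $G=\mathbb{Z}$, $n=1$, $m=2$, $Y_2\equiv 1/2$, and $X_1=_{\text{st}}Y_1=_{\text{st}}W$ geometric with $P(W>k)=q^{k+1}$, $k=0,1,\dots$, $0<q<1$; $W$ is $\mathbb{Z}$-IFR. Then $P(W+1/2>t)/P(W>t)$ equals $1/q$ on $[k,k+1/2)$ and $1$ on $[k+1/2,k+1)$ for $k\ge 0$. Hence $W\not\leq_{\text{hr}}W+1/2$.
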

	\begin{proof}We only need to apply Theorem \ref{thpcon}, adding to the initial $n$ pairs, $(0,Y_{n+1}),\dots,(0,Y_{m})$. Observe that $0$ is in the $\overline{G}$-IFR class and in the $\overline{G}$-DRHR class for all $G$.\end{proof}
	
	This result is shown for absolutely continuous IFR random variables in \cite[Lemma 3.10]{shyabi}.

\section{Stochastic comparisons of random sums in the hr and the rh orders}		\label{sestoc}

We now deal with the stochastic comparisons of random sums. In Shaked and Shanthikumar \cite[Thm. 1.B.7]{shshst} it was shown that $\sum_{i=1}^N X_i\leq_{\text{hr}}\sum_{i=1}^M X_i$ if $\seq{X}$ are independent non-negative IFR random variables, $M$ and $N$ are non-negative integer valued random variables independent of the previous sequence and $N\leq_{\text{hr}}M$. The condition for the $X_i$ can also be generalized in a simple way using the concept of monotone convolution survival ratio. This class of functions, in fact, characterizes the preservation of the hr order under random summation.
\begin{defn}
	Let $\seq{X}$ be a sequence of independent random variables. We say that  $\seq{X}$ has
	\begin{enumerate}
		\item  Monotone convolution ratio or MCR if $X_1+\dots +X_n\leq_{\text{rh}}X_1+\dots X_{n+1}$ for each $n=1,2,\dots$
		\item   Monotone convolution survival ratio or MCSR if $X_1+\dots +X_n\leq_{\text{hr}}X_1+\dots X_{n+1}$ for each $n=1,2,\dots$
	\end{enumerate}
	\label{dfrelc}\end{defn}
\begin{rem}
	As an immediate consequence of Corollary \ref{cosum} (a), we see that non-negative random variables in the $\overline{G}$-IFR class are MCSR. 	This includes the IFR class of nonnegative random variables. Moreover, Corollary \ref{cosum} (b) implies that non-negative random variables in the $\overline{G}$-DRHR class are MCR, thus including the decreasing reversed hazard rate case. Furthermore, if we consider $G=\mathbb{Z}$, we see that many discrete distribution verify both properties.  For instance,  Binomial, Poisson or negative binomial random variables (see \cite[p. 198]{adinve} for a detailed discussion). 
		We are in a position to enunciate the following. 
\end{rem}
	\begin{prop} 	Let $\seq{X}$ be a sequence of independent non-negative random variables. Let $N$ and $M$ be independent non-negative random variables, also independent of $\seq{X}$.  Then it follows
		\begin{enumerate}[(a)]
			\item $\sum_{i=1}^N X_i\leq_{\text{hr}}\sum_{i=1}^M X_i$ for all $N\leq_{\text{hr}} M$ if and only if $\seq{X}$ is MCSR.  
				\item $\sum_{i=1}^N X_i\leq_{\text{rh}}\sum_{i=1}^M X_i$ for all $N\leq_{\text{rh}} M$ if and only if $\seq{X}$ is MCR.  
		\end{enumerate}\label{promas}\end{prop}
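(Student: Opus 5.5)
We use the bivariate characterization of Theorem \ref{thbiva}, in the same way as in the proof of Theorem \ref{thconv}, together with a mixture argument. Write $S_n=X_1+\dots+X_n$, with $S_0=0$. We treat part (a); part (b) is symmetric, using Theorem \ref{thbiva}(b) and $\leq_{\text{rh}}$.

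For the ``only if'' direction, take $N=n$ and $M=n+1$ degenerate. Then $N\leq_{\text{hr}} M$, because $\bar G/\bar F$ is increasing when $\bar F=1_{(-\infty,n)}$ and $\bar G=1_{(-\infty,n+1)}$, with the usual convention $a/0=\infty$. The hypothesis then gives $S_n\leq_{\text{hr}} S_{n+1}$ for every $n$, which is exactly MCSR.

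For the ``if'' direction, first observe that MCSR together with transitivity of $\leq_{\text{hr}}$ gives $S_n\leq_{\text{hr}} S_m$ for all $n\leq m$. The case $n=0$ needs separate care: $0\leq_{\text{hr}} S_1$ holds because $S_1\geq 0$. A degenerate variable at $0$ is hr-smaller than any non-negative variable, since $\bar F(x)=0$ for $x\geq 0$ and $\bar G(x)=1=\bar F(x)$ for $x<0$. Hence the family $\{S_n\}_{n\geq 0}$ is increasing in $\leq_{\text{hr}}$.

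Now let $N^\star, M^\star$ be independent copies of $N$ and $M$, and let $(S_n)$ and $(S'_n)$ be two independent copies of the partial-sum process, independent of $N^\star, M^\star$. Take $h\in{\cal G}_{hr}$. Conditioning on $(N^\star,M^\star)=(n,m)$ gives
\[E\Delta h\Big(S_{N^\star},S'_{M^\star}\Big)=\sum_{n,m}P(N=n)P(M=m)\,\phi(n,m),\qquad \phi(n,m):=E\Delta h(S_n,S'_m).\]
We want to show this sum is $\leq 0$, which by Theorem \ref{thbiva}(a) yields the conclusion. The key observation is that $\phi$ is itself in ${\cal G}_{hr}$ as a function on $\mathbb{Z}_+^2$. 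Indeed, $\phi$ is antisymmetric, since $\phi(n,m)=-\phi(m,n)$ by exchanging the two independent copies. We then need $\phi(n,m)$ to be increasing in $n$ for $n\geq m$, i.e. $\phi(n,m)\leq\phi(n+1,m)$.

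Once that monotonicity is established, apply Theorem \ref{thbiva}(a) to $N\leq_{\text{hr}} M$ with the function $\tilde h(n,m)=\phi(n,m)/2$, extended to $\mathbb{R}^2$ piecewise constantly, for instance via $\lfloor\cdot\rfloor$. Its antisymmetric part is $\Delta\tilde h=\phi$, which is increasing in its first argument on $x\geq y$. This gives $E\phi(N^\star,M^\star)\leq 0$, which is exactly the required sum.

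The main obstacle is the inequality $\phi(n,m)\leq\phi(n+1,m)$ for $n\geq m$. Write $S_{n+1}=S_n+X_{n+1}$ and $S'_n$ similarly. The plan is to reduce it to the bivariate inequality for the pair $S_m\leq_{\text{hr}} S_n$, but applied to a modified function that absorbs the extra summand. Concretely, one can couple the copies so that the comparison becomes $E[\Delta h(S_n+X_{n+1},S'_m)-\Delta h(S_n,S'_m)]\geq 0$. This does not follow from monotonicity of $\Delta h$ alone, since $\Delta h(x,y)$ is only controlled for $x\geq y$.

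We therefore split the expectation on the events $\{S_n\geq S'_m\}$ and $\{S_n<S'_m\}$. On the first event, monotonicity of $\Delta h$ in $x\geq y$ and $X_{n+1}\geq 0$ give the sign directly. On the second event, we use antisymmetry and the hr comparison $S_m\leq_{\text{hr}} S_n\leq_{\text{hr}} S_{n+1}$. If this direct splitting becomes cumbersome, the fallback is to verify $S_{N}\leq_{\text{hr}} S_M$ through the survival-ratio definition. That definition reduces the claim to the TP$_2$ property of $(n,x)\mapsto P(S_n>x)$ in $n\geq 0$ and $x$, which is equivalent to MCSR, combined with $N\leq_{\text{hr}} M$. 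The basic composition formula for TP$_2$ kernels then gives that $P(S_M>x)/P(S_N>x)$ is increasing.
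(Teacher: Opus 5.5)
Your ``only if'' direction matches the paper's, and your reduction of the ``if'' direction is sound. If $\phi(n,m)=E\Delta h(S_n,S'_m)$ satisfies $\phi(n,m)\leq\phi(n+1,m)$ for $n\geq m$, then Theorem~\ref{thbiva}(a) applied to $N\leq_{\text{hr}}M$ finishes the proof. But that inequality is the entire content of the proof, and you leave it open. Neither route you suggest closes it.

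\emph{The pathwise split.} It breaks down on $\{S_n+X_{n+1}<S'_m\}$. Write $g(x,y)=\Delta h(x,y)$ for $x\geq y$. On that event the increment is $g(S'_m,S_n)-g(S'_m,S_n+X_{n+1})$. Membership in ${\cal G}_{hr}$ says nothing about how $g$ depends on its second argument, so no coupling can give this a sign. You must use the hazard-rate comparisons distributionally.

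\emph{The TP$_2$ fallback.} It only proves a weaker statement. The basic composition formula needs $n\mapsto P(N=n),P(M=n)$ to be TP$_2$, i.e.\ $N\leq_{\text{lr}}M$. Under $N\leq_{\text{hr}}M$ only $P(N\geq n)$ is TP$_2$. After Abel summation you would then need $P(S_n>x)-P(S_{n-1}>x)$ to be TP$_2$ in $(n,x)$, which MCSR does not provide.

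The paper avoids all this by citing the mixture theorem \cite[Thm.~1.B.14]{shshst}. It says that if $X(\theta)$ increases in $\theta$ in $\leq_{\text{hr}}$ and $\Theta_1\leq_{\text{hr}}\Theta_2$, then $X(\Theta_1)\leq_{\text{hr}}X(\Theta_2)$. Take $X(n)=S_n$, which is hr-increasing by MCSR, transitivity and $0\leq_{\text{hr}}S_1$; the result is then immediate.

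If you prefer to keep your self-contained route, the missing step can be proved exactly as in the proof of Theorem~\ref{thbiva}. Set $A=S'_m\leq_{\text{hr}}B=S_n\leq_{\text{hr}}C=S_{n+1}$, with $\mu$ dominating all three. Let $e_Z(y)=E[g(Z,y)\mid Z>y]$, so $0\leq e_A\leq e_B\leq e_C$ by the residual-life characterization. Let $r_Z=f_Z/\bar F_Z$ as in (\ref{fairev}). By (\ref{demo1}), $\phi(n+1,m)-\phi(n,m)=\int I(y)\,d\mu(y)$, where, with everything evaluated at $y$ and $\bar F_A(y)>0$,
\[
\frac{I(y)}{\bar F_A(y)}=\bar F_C\bigl(r_Ae_C-r_Ce_A\bigr)-\bar F_B\bigl(r_Ae_B-r_Be_A\bigr)\geq 0 .
\]
This holds because $r_A\geq r_B\geq r_C$ $\mu$ a.e.\ by (\ref{frevord}), which gives $r_Ae_C-r_Ce_A\geq r_Ae_B-r_Be_A\geq 0$, together with $\bar F_C\geq\bar F_B$. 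If $\bar F_A(y)=0$, then $I(y)=f_A(\bar F_Ce_C-\bar F_Be_B)(y)\geq 0$. Part (b) is analogous with ${\cal G}_{rh}$.
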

\begin{proof} Let us prove part (a). Let us assume that  $\seq{X}$ is non-negative and MCSR (and therefore, $0\leq_{\text{hr}} X_1$). This property, together with $N\leq_{\text{hr}}M$, implies that $\sum_{i=1}^M X_i\leq_{\text{hr}}\sum_{i=1}^N X_i$, as follows from \cite[Thm.1.B.14]{shshst}. The only if part is clear, as $n\leq_{\text{hr}}n+1$.  This concludes the proof of part (a).  Part (b) is shown in a similar way.
\end{proof}
It seems natural to investigate if Proposition \ref{promas} can be extended to random sums, with different summands in each random sum.  However, additional conditions are needed.  For instance, in the last part of Example \ref{exexpo} we dealt with an exponential random variable with a point mass at $0$ of probability $p$. This corresponds to the random sum $\sum_{i=1}^N X$, being $N$ a Bernoulli random variable and $X$ an exponential random variable with parameter $\lambda$. Let $Y$ be an exponential random variable with parameter $\lambda^{\star}$. It is clear that $X\leq_{hr}Y$. From (\ref{miexrh}) we can deduce that
\begin{equation}
\sum_{i=1}^N X\leq_{\text{rh}} \sum_{i=1}^N Y \Leftrightarrow \lambda=\lambda^{\star}.\label{couexp}
\end{equation}
Moreover reverting the sign in the previous formula we have
\[\sum_{i=1}^N (-Y)\leq_{\text{hr}} \sum_{i=1}^N (-X) \Leftrightarrow \lambda=\lambda^{\star}.\] 
This shows that the hazard rate ordering of the summands is not a sufficient condition to guarantee the hazard rate ordering of the random sum, even if the random summation index does not include positive mass at $0$, as
\[\sum_{i=1}^N X=\sum_{i=1}^{N+1} X_i,\quad \hbox{with} \quad X_1=0 \quad \hbox{and} \quad X_2=X\]

Shanthikumar and Yao \cite[Tm.3.11]{shyabi} provided sufficient conditions for the preservation of the hr order under random sums for absolutely continuous summands.  We revisit their result, simplifying some assumptions and generalizing them to absolute continuity with respect to a dominating measure. Also, we include the possibility for the random summation index to take strictly positive values at $0$.  
	\begin{thm} Let $\seq{X}$ and $\seq{Y}$ be two sequences of independent random variables, being absolutely continuous with respect to a dominating measure $\mu$, and having respective densities $f_{X_i }$ and $f_{Y_i}$. Let $M$ and $N$ be two random variables defined on $\mathbb{N}$, such that $N$ is independent of $\seq{X}$ and $M$ is independent of $\seq{Y}$. Let $T_n=X_1+\dots+X_n$ and $S_n=Y_1+\dots+Y_n$ ($S_0=T_0=0$.). Assume that $N\leq_{\text{lr}}M$.
	\begin{enumerate}[(a)] \item If for all $n\in Supp(N)$ and $m\in Supp(M)$, $T_n\leq_{\text{hr}} S_m$, $n\leq m$ and the following condition holds, (for each $n< m$, $\mu$ a.e.):
		\begin{equation}
\bar{F}_{S_m}(x) f_{T_n}(x) - f_{S_m}(x) \bar{F}_{T_n}(x) \geq \bar{F}_{T_m}(x) f_{S_n}(x)- f_{T_m}(x) \bar{F}_{S_n}(x)\label{chorhr}\end{equation}
	then $T_N\leq_{\text{hr}}S_M$
		\item If for all $n\in Supp(N)$ and $m\in Supp(M)$ $T_n\leq_{\text{rh}} S_m$, for $n\leq m$,  and the following condition holds  (for $n<m$, $\mu$ a.e.):
		\begin{equation}
			F_{S_m}(x) f_{T_n}(x) - f_{S_m}(x)F_{T_n}(x) \leq F_{T_m}(x) f_{S_n}(x) - f_{T_m}(x) F_{S_n}(x),\label{chorrh}\end{equation} then $T_N\leq_{\text{rh}}S_M$ \end{enumerate}\label{thgera}\end{thm}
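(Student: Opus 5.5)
The plan is to reduce everything to the density characterization of Theorem \ref{tgf1}(a). Write $p_n=P(N=n)$ and $q_m=P(M=m)$. Since $N$ is independent of $\seq{X}$ and $M$ of $\seq{Y}$, the random sums $T_N$ and $S_M$ are absolutely continuous with respect to $\mu_0:=\delta_0+\mu$. Their densities are $f_{T_N}=\sum_n p_n f_{T_n}$ and $f_{S_M}=\sum_m q_m f_{S_m}$, where $T_0=S_0=0$ contributes the atom at $0$; similarly $\bar F_{T_N}=\sum_n p_n\bar F_{T_n}$ and $\bar F_{S_M}=\sum_m q_m\bar F_{S_m}$. The densities of $T_n$ and $S_m$ with respect to $\mu_0$ are assumed available, e.g.\ $\mu$ is taken to dominate these convolutions, which is how I read the hypothesis. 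By Theorem \ref{tgf1}(a) it then suffices to show, $\mu_0$ a.e.,
\[
D(x):=f_{T_N}(x)\bar F_{S_M}(x)-f_{S_M}(x)\bar F_{T_N}(x)=\sum_{n,m}p_nq_m\,a_{n,m}(x)\ge 0,
\]
where $a_{n,m}:=f_{T_n}\bar F_{S_m}-f_{S_m}\bar F_{T_n}$.

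I split the double sum into three parts: the diagonal $n=m$, the pairs $n<m$, and the pairs $n>m$. Each pair with $n<m$ is grouped with its mirror $(m,n)$, giving
\[
D=\sum_n p_nq_n a_{n,n}+\sum_{n<m}\bigl(p_nq_m a_{n,m}+p_mq_n a_{m,n}\bigr).
\]
For the diagonal, $T_n\le_{\text{hr}}S_n$ gives $a_{n,n}\ge 0$ a.e.\ by Theorem \ref{tgf1}(a). For $n<m$, the same theorem applied to $T_n\le_{\text{hr}}S_m$ gives $a_{n,m}\ge0$. Condition (\ref{chorhr}) says exactly $a_{n,m}\ge -a_{m,n}$, i.e.\ $a_{n,m}+a_{m,n}\ge 0$. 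The likelihood ratio hypothesis $N\le_{\text{lr}}M$ gives $p_nq_m\ge p_mq_n$ for $n<m$.

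The key step is the pairwise bound. If $a_{m,n}\ge 0$ there is nothing to prove. Otherwise $a_{m,n}<0$, and
\[
p_nq_m a_{n,m}+p_mq_n a_{m,n}\ge p_mq_n(a_{n,m}+a_{m,n})\ge 0,
\]
using $a_{n,m}\ge 0$ and $p_nq_m\ge p_mq_n$. Some care is needed with indices outside the supports: if $p_n=0$ or $q_m=0$ the term is still handled, since $p_nq_m\ge p_mq_n$ forces $p_mq_n=0$ when $p_nq_m=0$. Hence the hypotheses are only needed for $n\in Supp(N)$ and $m\in Supp(M)$. The countably many $\mu$-null exceptional sets combine into a single null set. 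The atom at $0$, needed when $P(N=0)>0$, is treated as one more point of the dominating measure, and the inequalities at $x=0$ follow from the same theorem applied to $\mu_0$.

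The main obstacle is justifying the exchange of the infinite sums with the pointwise a.e.\ inequalities, and checking that the mixture densities are genuine densities with respect to a common measure. I would handle this by monotone convergence: all terms $p_nf_{T_n}$ are nonnegative, and $D$ converges absolutely since $\bar F\le 1$ and $\sum p_nf_{T_n}<\infty$ a.e. After that, Theorem \ref{tgf1}(a) gives $T_N\le_{\text{hr}}S_M$.

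Part (b) follows the same way using Theorem \ref{tgf1}(b), with $\bar F$ replaced by $F$ and all inequalities reversed, where (\ref{chorrh}) plays the role of (\ref{chorhr}). Alternatively, it can be deduced by passing to $-T_N$ and $-S_M$ as in the proof of Theorem \ref{tgf1}(b).
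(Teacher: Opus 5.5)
Your proposal is correct and follows the paper's proof. You use mixture densities with respect to a measure with an atom at $0$, reduce to Theorem~\ref{tgf1}(a), and split the double sum into the diagonal and mirrored pairs $(n,m),(m,n)$; each pair is bounded below by $p_mq_n(a_{n,m}+a_{m,n})\ge 0$ using $N\le_{\text{lr}}M$, $a_{n,m}\ge 0$ and (\ref{chorhr}). Your case split on the sign of $a_{m,n}$ is unnecessary, and your treatment of supports, null sets and absolute convergence fills in details the paper leaves implicit.

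For (b), use the direct mirrored argument. If your ``alternative'' means applying part (a) to $-S_M\le_{\text{hr}}-T_N$, it does not work as stated. The roles of the indices swap, so you would need $M\le_{\text{lr}}N$ and $T_m\le_{\text{rh}}S_n$ for $n\le m$.
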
 
		\begin{proof}To show part (a), let $p^{\star}_m = P(M = m)$ and $p_n = P(N = n)$ ($m, n =0, 1, 2, \dots$). Let us assume with no loss of generality that $\mu(\{0\})=1$ (otherwise we modify the dominating measure and the densities, if needed). Note that the respective densities of $T_N$ and $S_M$ with respect to $\mu$ are given by
			\begin{equation}
				\sum_{n=0}^\infty f_{T_n}(x)p_n,\quad \hbox{and}\quad 	\sum_{n=0}^\infty f_{S_n}(x)p^{\star}_n\label{densis}
			\end{equation}
			where $T_0=S_0=0$, and $f_{T_0}=f_{S_0}=1_{\{0\}}(x)$, $x\in \mathbb{R}$. Observe that due to Theorem \ref{tgf1} (a), we need to check that 
			\begin{equation}
			\sum_{n=0}^\infty f_{S_n}(x)p^{\star}_n\sum_{n=0}^\infty 	\bar{F}_{T_n}(x)p_n\leq\sum_{n=0}^\infty f_{T_n}(x)p_n\sum_{n=0}^\infty	\bar{F}_{S_n}(x)p^{\star}_n \label{desi0}
		\end{equation}
			Let us fix $m> n$ and collect all the terms in $m$ and $n$. The previous inequality will hold if 
		\begin{equation}	\left(f_{S_n}(x)\bar{F}_{T_m}(x)-f_{T_m}(x)\bar{F}_{S_n}(x)\right)p^{\star}_np_m+ \left(f_{S_m}(x)\bar{F}_{T_n}(x)-f_{T_n}(x)\bar{F}_{S_m}(x)\right)p^{\star}_mp_n\leq 0 \label{desi1}
		\end{equation} and, collecting all the terms in $n$, we need  for all $n\in Supp(N)\bigcap Supp(M)$
		\begin{equation}
		\left(f_{S_n}(x)\bar{F}_{T_n}(x)-f_{T_n}(x)\bar{F}_{S_n}(x)\right)p^{\star}_np_n\leq 0\label{desi2}
		\end{equation}
		As $N\leq_{\text{lr}}M$, then $p^{\star}_np_m\leq p^{\star}_mp_n, \ m\geq n$. As both inequalities are trivial if $p^{\star}_mp_n=0$, let us consider from now on the rest of the cases. As $T_n\leq_{\text{hr}}S_n$, (\ref{desi2}) holds for $n\in Supp(N)\bigcap Supp(M)$. Moreover, as $T_n\leq_{\text{hr}}S_m$ for $n\in Supp(N)$ and $m\in Supp(M)$, the second term in (\ref{desi1}) is non-positive.   Therefore, we can bound the expression in 
			(\ref{desi1}) by
			\[	\left(f_{S_n}(x)\bar{F}_{T_m}(x)-f_{T_m}(x)\bar{F}_{S_n}(x)+f_{S_m}(x)\bar{F}_{T_n}(x)-f_{T_n}(x)\bar{F}_{S_m}(x)\right)p^{\star}_np_m\leq 0,\]
		where in the last inequality we have used (\ref{chorhr}). This shows (\ref{desi1}) and completes the proof of part (a).  The proof of part (b) is similar. \end{proof}
		Observe that, in \cite[Thm. 3.2]{shyabi}, a more complex condition than the one given in (\ref{chorhr}) was required. Our condition only requires to evaluate condition in \cite[Thm. 3.2]{shyabi}  on the diagonal $y=x$. Moreover, we use general densities so that we can work with discrete or mixed random variables, and include the possibility of a point mass at $0$ in the random summation indices.  
		Finally, under the non-negativity and MCSR assumption of one of the sequences, we can replace $N\leq_{\text{lr}}M$ by $N\leq_{\text{hr}}M$, as it is shown in the following. 
		\begin{cor} Let $\seq{X}$ and $\seq{Y}$ be two sequences of independent random variables, being absolutely continuous with respect to a dominating measure $\mu$, and having respective densities $f_{X_i }$ and $f_{Y_i}$. Let $M$ and $N$ be two random variables defined on $\mathbb{N}$, such that $N$ is independent of $\seq{X}$ and $M$ is independent of $\seq{Y}$. Let $T_n=X_1+\dots+X_n$ and $S_n=Y_1+\dots+Y_n$. We have the following
			\begin{enumerate}[(a)] \item If the conditions in Theorem \ref{thgera} (a) for $\seq{X}$ and $\seq{Y}$ are satisfied and, in addition, either  $\seq{X}$ or $\seq{Y}$ are non-negative and MCSR, 
				then $T_N\leq_{\text{hr}}S_M$, when $N\leq_{\text{hr}}M$. 
				\item If the conditions in Theorem \ref{thgera} (b) are satisfied and, in addition, either  $\seq{X}$ or $\seq{Y}$ are non-negative and MCR, then $T_N\leq_{\text{rh}}S_M$ when $N\leq_{\text{rh}}M$. \end{enumerate}
			\label{coge}\end{cor}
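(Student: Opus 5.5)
The proof goes through an intermediate random sum and transitivity. Assume for definiteness that $\seq{Y}$ is non-negative and MCSR. We interpose $S_N=\sum_{i=1}^N Y_i$ and aim for
\[T_N\leq_{\text{hr}} S_N\leq_{\text{hr}} S_M .\]
The right-hand comparison follows directly from Proposition \ref{promas} (a), applied to the sequence $\seq{Y}$ and the indices $N\leq_{\text{hr}}M$. Here the non-negativity and the MCSR property are exactly what that proposition requires.

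For the left-hand comparison $T_N\leq_{\text{hr}}S_N$ we invoke Theorem \ref{thgera} (a) with $M$ replaced by $N$. Its hypothesis $N\leq_{\text{lr}}N$ is trivial. The remaining hypotheses ask that $T_n\leq_{\text{hr}}S_m$ for $n\leq m$ and that (\ref{chorhr}) holds for $n<m$, with $n,m\in Supp(N)$. We read ``the conditions in Theorem \ref{thgera} (a)'' in the corollary as these requirements on the sequences over the relevant index ranges. Note that $Supp(N)\subseteq\{0,1,\dots,\sup Supp(M)\}$, since $N\leq_{\text{hr}}M$ implies $N\leq_{\text{st}}M$ and hence $\sup Supp(N)\leq \sup Supp(M)$.

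The main obstacle is this index-range issue. The conditions of Theorem \ref{thgera} are stated for $n\in Supp(N)$ and $m\in Supp(M)$, whereas the intermediate comparison needs pairs with both indices in $Supp(N)$. I would handle it by interpreting the hypothesis as holding for all pairs $n\le m$ in $Supp(N)\cup Supp(M)$. Alternatively, I would re-examine the proof of Theorem \ref{thgera}: in the inequality (\ref{desi1}) the only pairs that matter are those with $p_n p_m^\star>0$, which for the index pair $(N,N)$ means $n,m\in Supp(N)$. If the hypothesis is granted only on $Supp(N)\times Supp(M)$, this is where extra care is needed.

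When instead $\seq{X}$ is the non-negative MCSR sequence, the roles are symmetric, with intermediate $T_M$:
\[T_N\leq_{\text{hr}}T_M\leq_{\text{hr}}S_M .\]
The first step is Proposition \ref{promas} (a) applied to $\seq{X}$. The second step is Theorem \ref{thgera} (a) with the index pair $(M,M)$, using the same support discussion as above. Both cases then conclude by transitivity of $\leq_{\text{hr}}$.

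Part (b) is identical in structure: it uses Proposition \ref{promas} (b) with the MCR property, Theorem \ref{thgera} (b), and transitivity of $\leq_{\text{rh}}$.
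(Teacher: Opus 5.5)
Your proposal is correct and is the paper's own argument. You interpose $S_N$ (respectively $T_M$), obtain $T_N\leq_{\text{hr}}S_N$ from Theorem \ref{thgera} (a) with the trivial $N\leq_{\text{lr}}N$ and $S_N\leq_{\text{hr}}S_M$ from Proposition \ref{promas} (a), and conclude by transitivity, with the same structure for the rh order. The index-range issue you flag is real, and the paper passes over it silently: its one-line appeal to Theorem \ref{thgera} implicitly assumes the hypotheses on $Supp(N)\times Supp(N)$ (respectively $Supp(M)\times Supp(M)$), which is exactly your reading of ``the conditions in Theorem \ref{thgera} (a)''.
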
 
\begin{proof}Let us prove part (a).  Let us assume that $\seq{Y}$ is non-negative and MCSR.  Then, $N\leq_{\text{lr}}N$, and therefore, $T_N\leq_{\text{hr}}S_N$. Moreover, applying Proposition \ref{promas} (a) $S_N\leq_{\text{hr}} S_M$. The conclusion follows applying the transitivity property of the hr order. If  $\seq{X}$ is non-negative and MCSR, we similarly would have the chain of inequalities $T_N\leq_{\text{hr}}T_M\leq_{\text{hr}} S_M$. The proof of part (b) is similar.   \end{proof}
\begin{rem} It is interesting to examine the case of random sums where  $p_0 = P(N = 0)>0$. Observe that in this case, conditions in Theorem \ref{thgera} (a) imply that $0\leq_{\text{hr}}S_m$, $m\in Supp(M)$, thus implying that $S_m$ is non-negative for $m\in Supp(M)$.  Under this condition and $T_m\leq_{\text{hr}}S_m$, (\ref{chorhr}) would be automatically satisfied, for any $m\in  Supp(M)$ and $n=0$. Details are omitted. The case of the rh order would be more complicated, because, in addition to $0\leq_{\text{rh}}S_m$, condition (\ref{chorrh}) for $m\in  Supp(M)$ and $n=0$ would be
		\[	F_{S_m}(x) f_{T_0}(x) - f_{S_m}(x)F_{T_0}(x) \leq F_{T_m}(x) f_{S_0}(x) - f_{T_m}(x) F_{S_0}(x)\]
and we would need	
	\[f_{T_m}(x)\leq f_{S_m}(x),\quad x>0, \quad m\in Supp(M).\]
	If  both $T_m$ and $S_m$ are strictly positive, this inequality would imply that $T_m=_{\text{st}}S_m$.  A specific example of this strong condition with exponential distributions has been given in (\ref{couexp}).\end{rem}
\section*{Acknowledgments}
The author was supported by Gobierno de Aragón Research Project E48\_23R, and by the Spanish Research Project PID2024-155364NB-I00 (MINECO/FEDER).


\begin{thebibliography}{9}
			 \bibitem{abmato} Abels, H. and Manoussos, A., 2012.  Topological generators of abelian Lie groups and hypercyclic finitely generated abelian semigroups of matrices. {\em Adv. Mathemat.}, {\bf 229,} 1862--1872.
		 \bibitem{adinve} Adrián, D., Bad\'{\i}a, F.G. and Sang\"{u}esa, C., 2026.  Inventory models with nonlinear costs and random lead
		 times: a general framework and some generalizations using
		 reliability techniques. {\em TOP}, {\bf 34,} 187--206.
		  \bibitem{anstat}  Andersen, P.K., Borgan, O., Gill, R.H. and Keiding, N. 1993. {\it Life Statistical Models Based on Counting Processes}, Springer: New York.
	 \bibitem{balogc} Bad\'{\i}a, F.G., Federgruen, A. and Sang\"{u}esa, C., 2021.  Log-concavity of compound distributions with applications in operational and actuarial models, {\em Probab. Eng. Inform. Sci.}, {\bf 35,} 210--235.
	 \bibitem{beanin} Belzunce, F., Martínez-Riquelme, C., and Mulero, J. 2015. {\it An Introduction to Stochastic Orders}. Academic Press.
  \bibitem{biprob} Billingsley, P., 1995. {\it Probability and Measure}, 3rd Edition, Wiley: New York.
  \bibitem{blsath} Block, H.W., and Savits, T.H., 2015. The failure rate of a convolution dominates the failure rate of any IFR component., {\it Stat. Probab. Lett.}, {\bf 107}, 142--144.
 	 \bibitem{jaknin} Janssen, A. and Knoch, A. 2016.  Information bounds for nonparametric estimators of L-functionals and survival functionals under censored data, {\em Metrika}, {\bf 79,} 195--220.
    \bibitem{kionpo} Kijima, M., and Ohnishi, M., 1996. Portfolio selection problems via the bivariate characterization of stochastic dominance relations, {\it Math. Finance}, {\bf 6}, 237--277.
     \bibitem{maolli} Marshall, A.W. and Olkin, I., 2007. {\it Life Distributions}, Springer: New York.
   \bibitem{mustst} M\"uller, A. and Stoyan, D., 2002. {\it Comparison Methods for
  	Stochastic Models and Risks}, Wiley: Chichester, UK.
  	   \bibitem{nareli} Nair, N.U., Sankaran, P.G. and Balakrishnan, N., 2008. {\it Reliability Modelling and Analysis in Discrete Time}, Academic Press: Chichester, UK.
  	   \bibitem{nasedi} Nanda, A.K. and Sengupta, D., 2005.  Discrete life distributions with Decreasing Reversed Hazard Rate, {\it Sankhya}, {\bf 67,} 106--125.
  	\bibitem{roinve} Rosling, K., 2002.  Inventory cost rate functions with nonlinear shortage costs. {\it Oper. Res.}, {\bf 50,} 797--809.
  	\bibitem{senalo} Sengupta, D. and Nanda, A.K., 1999.  Log-concave and concave distributions in reliability. {\it Nav. Res. Log.}, {\bf 46}, 419-433.
  	\bibitem{shshst} Shaked, M. and Shanthikumar, J.G., 2007. {\it Stochastic Orders}. Springer: New York.
  	  \bibitem{shyabi} Shanthikumar, J.G., and Yao, D.D., 1991. Bivariate characterizations of some stochastic orders, {\it Adv. Appl. Probab.}, {\bf 23}, 642-659.
\end{thebibliography}
\end{document}